\newtheorem{them}{Theorem}[section]
\newtheorem{lem}{Lemma}[section]
\newtheorem*{thm}{Main Theorem}
\newtheorem{definition}{Definition}
\theoremstyle{remark}
\newtheorem{rem}{Remark}[section]
\numberwithin{equation}{section}
\begin{document}
\title[On product minimal Lagrangian submanifolds]
{On product minimal Lagrangian submanifolds in complex space forms}

\author[X. Cheng, Z. Hu, M. Moruz and L. Vrancken]
{Xiuxiu Cheng, Zejun Hu, Marilena Moruz and Luc Vrancken}

\thanks{2010 {\it
Mathematics Subject Classification.}  53B25, 53C42, 53D12.}

\thanks{The first two authors were supported by NSF of China,
Grant Number 11771404. The third author is a postdoctoral fellow
of FWO - Flanders, Belgium.}

%\date{}

\keywords{Complex space form, product submanifold, minimal
Lagrangian submanifold, Calabi product immersion}

\begin{abstract}
In this paper we consider minimal Lagrangian submanifolds in
$n$-dimensional complex space forms. More precisely, we study
such submanifolds which, endowed with the induced metrics,
write as a Riemannian product of two Riemannian
manifolds, each having constant sectional curvature. As the main
result, we give a complete classification of these submanifolds.
\end{abstract}

\maketitle

\vskip 20pt
%===============================================================================
\section{Introduction}\label{sect:1}

In this paper we study Lagrangian submanifolds of the $n$-dimensional 
complex space forms $\tilde{M}^n(4\tilde c)$ of constant holomorphic 
sectional curvature $4\tilde c$. Indeed, the complex space forms are 
the easiest examples of K\"ahler manifolds. These are almost Hermitian 
manifolds for which the almost complex structure $J$ is parallel with 
respect to the Levi-Civita connection $\nabla$ of the Hermitian metric 
$g$. The standard models of complex space forms are the complex projective 
space $\mathbb{CP}^n$, the complex Euclidean space $\mathbb{C}^n$ and the
complex hyperbolic space $\mathbb{CH}^n$, according to whether the
holomorphic sectional curvature satisfies $\tilde{c}>0$, $\tilde{c}=0$ 
or $\tilde{c}<0$.

There are two special classes of submanifolds of a K\"ahler manifold
depending on the behavior of the complex structure $J$ with respect
to the submanifold.

A submanifold $M$ of $\tilde{M}^n$ is called almost complex 
if and only if $J$ maps tangent vectors to tangent vectors. On the other 
hand, $M$ is called totally real if the almost complex structure $J$ of
$\tilde{M}^n$ carries each tangent space of $M$ into its 
corresponding normal space. The study of minimal totally real submanifolds
originates with the work of Chen and Ogiue (see \cite{CO}). A
special case here happens when the real dimension of the submanifold
equals the complex dimension of the ambient space. In that case $J$
interchanges the tangent and the normal spaces. Such submanifolds
are called Lagrangian submanifolds. These can also be seen as
submanifolds of the largest possible dimension on which the
symplectic form vanishes identically.

For the study of minimal Lagrangian immersions in complex space
forms one may find a short survey in \cite{C}, where some of the
main results are mentioned (see also for example \cite{BR,CLU,CU,
CDVV,CO,CV,DT,DLVW,E,KV,LW1,LW3,MO,WLV1,WLV2}).

The fundamental question in submanifold theory is then to determine
to what extent the geometry of the submanifold determines the
immersion of the submanifold in the ambient space. In that respect,
it was shown by Ejiri \cite{E} that an $n$-dimensional Lagrangian
minimal submanifold of constant sectional curvature $c$ immersed in
an $n$-dimensional complex space form is either totally geodesic or
flat $(c=0)$ (cf. also \cite{LZ} and \cite{DT}). More precisely in
the latter case it must be congruent to a specific Lagrangian tori
in the complex projective space (see Main Theorem below). Note that
the condition that the immersion is minimal is unavoidable. From
\cite{CDVV} and \cite{T} we can see that one cannot expect to
obtain a general classification of all Lagrangian submanifolds of
real space forms in complex space forms.

In this paper we consider the logical next step. We will assume that
our manifold $M$ is isometric with $M_1^{n_1}(c_1)\times
M_2^{n_2}(c_2)$, i.e. it is a product of two real space forms of
constant sectional curvature, respectively $c_1$ and $c_2$. As the
main result of the paper we extend Ejiri's result by proving
\begin{thm}\label{maintheorem}
Let $\psi :M^n\to \tilde M^n(4\tilde{c})$ be a minimal Lagrangian
immersion into a complex space form with induced metric $\langle\cdot,\cdot\rangle$.
If $M^n=M_1^{n_1}(c_1)\times M_2^{n_2}(c_2)$, where $n=n_1+n_2$,
$M_1^{n_1}(c_1)$ (resp. $M_1^{n_2}(c_2)$) is an $n_1$
(resp. $n_2$)-dimensional Riemannian manifold of constant sectional
curvature $c_1$ (resp. $c_2$), then $c_1 c_2=0$. Moreover,
\begin{enumerate}
\item if $c_1=c_2=0$, then $M^n$ is equivalent to either
the totally geodesic immersion in $\mathbb{C}^n$ or the Lagrangian
flat torus in $\mathbb{CP}^n(4\tilde{c})$.
\item if $c_1 c_2=0$ and $c_1^2+c_2^2\neq 0$, without loss of generality, we may
assume that $c_1=0$ and $c_2\neq0$. Then we have
$c_2=\frac{n_1+n_2+1}{n_2+1}\tilde{c}>0$, say $\tilde{c}=1$, so the
ambient space is $\mathbb{CP}^n(4)$, and the immersion is congruent
with
\begin{equation*}
\tfrac{1}{n+1}(e^{iu_1},\ldots,e^{iu_{n_1}}, a e^{i u_{n_1+1}}y_1,
\ldots,a e^{i u_{n_1+1}}y_{n_2+1} ),
\end{equation*}
\end{enumerate}
where
\begin{enumerate}
\item [(i)] $(y_1,y_2,\ldots,y_{n_2+1})$ describes the standard sphere
$\mathbb{S}^{n_2}\hookrightarrow \mathbb{R}^{n_2+1}\hookrightarrow
\mathbb{C}^{n_2+1}$,

\item [(ii)] $a=\sqrt{n_2+1}$,

\item [(iii)] $u_1+\cdots+u_{n_1}+a^2 u_{n_1+1}=0$.
\end{enumerate}
\end{thm}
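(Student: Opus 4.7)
The plan is to exploit the orthogonal decomposition $TM = \mathcal{D}_1 \oplus \mathcal{D}_2$ induced by the product structure, where each $\mathcal{D}_i$ is the parallel distribution tangent to the factor $M_i^{n_i}(c_i)$, together with the totally symmetric cubic form $T(X,Y,Z) := \langle h(X,Y), JZ\rangle$ characteristic of Lagrangian immersions. With $e_1,\ldots,e_{n_1}$ and $f_1,\ldots,f_{n_2}$ local orthonormal frames of $\mathcal{D}_1$ and $\mathcal{D}_2$ respectively, I would first transcribe the Gauss equation into three families of algebraic constraints: for unit vectors $X,Y \in \mathcal{D}_i$ the sectional curvature $c_i$ equals $\tilde{c} + \langle h(X,X), h(Y,Y)\rangle - |h(X,Y)|^2$; for $X \in \mathcal{D}_1$ and $Y \in \mathcal{D}_2$ the same expression equals $0$; and minimality reads $\sum_\alpha T(e_\alpha,e_\alpha,Z) + \sum_\beta T(f_\beta,f_\beta,Z) = 0$ for every tangent $Z$.

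Next I would use the Codazzi equation, equivalent to the total symmetry of $\nabla T$, together with the fact that the Levi-Civita connection preserves both $\mathcal{D}_1$ and $\mathcal{D}_2$, to propagate the algebraic information. Because each factor has parallel curvature, certain diagonal components of $T$ must be covariantly constant along the complementary factor. This is precisely the setup underlying the classification of Calabi product Lagrangian immersions developed in \cite{CLU,WLV1,WLV2}, and I would import the relevant structural lemmas from there.

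The main obstacle, and the heart of the argument, is to establish $c_1 c_2 = 0$. Assuming for contradiction that $c_1,c_2 \neq 0$, I would combine the three Gauss relations above with the Codazzi equation to derive that the restriction of $T$ to each distribution satisfies the equations of an Ejiri-type minimal Lagrangian of constant sectional curvature in a lower-dimensional complex space form. Ejiri's theorem then forces each such restriction to correspond to either a totally geodesic or a flat factor, contradicting $c_i \neq 0$. The technical difficulty is keeping careful track of the mixed components $T(X,Y,Z)$ with arguments drawn from different distributions, which are in general nonzero and must be handled by an explicit frame calculation.

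Once $c_1 c_2 = 0$ is secured, the remaining classification is comparatively routine. The flat case $c_1 = c_2 = 0$ is handled directly by Ejiri's theorem, yielding either the totally geodesic immersion in $\mathbb{C}^n$ or the flat Lagrangian torus in $\mathbb{CP}^n(4\tilde{c})$. In the remaining case, say $c_1 = 0$ and $c_2 \neq 0$, tracing the mixed Gauss equation over each factor and combining with the minimality relation forces $c_2 = \tfrac{n_1+n_2+1}{n_2+1}\tilde{c}$, so in particular $\tilde{c} > 0$. The resulting shape of the second fundamental form matches exactly that of a Calabi product of a flat Lagrangian piece in $\mathbb{C}^{n_1}$ with the totally geodesic Lagrangian $\mathbb{RP}^{n_2} \hookrightarrow \mathbb{CP}^{n_2}$; an explicit integration via the Hopf fibration $\mathbb{S}^{2n+1} \to \mathbb{CP}^n$ then yields the parametrization in the statement, with the linear constraint $u_1+\cdots+u_{n_1}+a^2 u_{n_1+1}=0$ arising as the horizontality condition of the lift.
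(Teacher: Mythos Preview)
Your proposal has a genuine gap at the step you yourself flag as ``the heart of the argument,'' namely ruling out $c_1,c_2\neq 0$. You assert that the restriction of $T$ to each distribution will satisfy ``the equations of an Ejiri-type minimal Lagrangian of constant sectional curvature in a lower-dimensional complex space form,'' and then invoke Ejiri's theorem. But this reduction is not available without substantial prior work: for the Gauss equation to restrict cleanly to $\mathcal{D}_1$ you need $A_{JX}Z\in\mathcal{D}_1$ whenever $X,Z\in\mathcal{D}_1$, i.e.\ the mixed components $T(X_i,X_j,Y_k)$ must vanish; and for the restricted cubic form to be trace-free you need $T(Y_\beta,Y_\beta,X_k)=0$ as well. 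You acknowledge these mixed terms are ``in general nonzero'' but offer only ``an explicit frame calculation'' to control them. The paper obtains exactly these vanishings (its Lemma~3.8), but not from Gauss and Codazzi alone: the essential device is the \emph{Tsinghua Principle}---cyclically permuting the second covariant derivative of the Codazzi identity and applying the Ricci identity to obtain a purely curvature-and-$h$ relation (equation~(2.11) in the paper). This relation, specialized to vectors drawn from the two factors, is what forces the mixed components to zero when $c_i\neq 0$. Without it, your plan does not get off the ground. Even after the mixed terms vanish, the paper does not invoke Ejiri on the factors; instead it builds a unit eigenvector field $X_1$ of $A_{JX_1}$ with constant eigenvalues, shows via Codazzi that $\nabla X_1=0$, and reads off $R(X_1,X_2)X_1=0$, contradicting $c_1\neq 0$.

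A second, smaller gap is in Case~(i). Saying ``the resulting shape of the second fundamental form matches exactly that of a Calabi product'' skips the entire content of the paper's Lemma~3.5: an inductive optimisation over unit spheres in $T_pM_1^{n_1}$ (maximising $f(u)=\langle A_{Ju}u,u\rangle$ on successive orthogonal complements) together with an implicit-function argument to extend the optimisers to smooth local fields. This produces a frame $\{X_i\}$ in which $A_{JX_i}X_j=\mu_iX_j$ for $i<j$ with explicit constants, and only then can one apply the Calabi-product recognition theorems of Li--Wang (the paper's Theorems~2.1--2.2, reference \cite{LW2}) iteratively $n_1$ times to reach the parametrisation. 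The references you cite (\cite{CLU,WLV1,WLV2}) do not contain this recognition result; it is \cite{LW2} that is needed.
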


\begin{rem}\label{rm:1.1}
The technique we use in the proof of the Main Theorem is based on two
steps. The first step is to take cyclic permutation of the covariant
derivative of the Codazzi equation. The second step is then expressing
the second fundamental form of the submanifold $M^n$ with respect to a
conveniently chosen frame. To do so, we proceed by induction (see
\cite{VLS} and \cite{CHMV}). One should notice that, eventually, our
main result follows directly from the theorems in \cite{LW2}.
\end{rem}

%===============================================================================
\section{Preliminaries}\label{sect:2}

In this section, we will recall the basic formulas for Lagrangian
submanifolds in complex space forms. Let $\tilde{M}^n(4\tilde{c})$
be a complex space form of complex dimension $n$ and having constant
holomorphic sectional curvature $4\tilde{c}$. Let $M^n$ be a minimal
Lagrangian submanifold in $\tilde{M}^n(4\tilde{c})$ given by the
immersion $\psi: M^n\to \tilde{M}^n(4\tilde{c})$ such that
\begin{equation}\label{eqn:2.1}
M^n=M_1^{n_1}(c_1)\times M_2^{n_2}(c_2),
\end{equation}
where $n_1+n_2=n$, $M_1^{n_1}(c_1)$ and $M_2^{n_2}(c_2)$ are
manifolds of real dimensions $n_1$ and $n_2$ and have constant
sectional curvature $c_1$ and $c_2$, respectively.

Let $\nabla$ and $\tilde\nabla$ be the Levi-Civita connections
on $M^n$ and $\tilde{M}^n(4\tilde{c})$, respectively. The
formulas of Gauss and Weingarten write out as
\begin{equation}\label{eqn:2.2}
\tilde\nabla_XY=\nabla_XY+h(X,Y), \quad \tilde\nabla_X\xi=-A_{\xi}X+\nabla^{\perp}_X\xi,
\end{equation}
for $X,Y$ tangent to $M^n$ and $\xi$ normal to $M^n$, where
$h,A$ and $\nabla^\perp$ are the second fundamental form,
the shape operator and the normal connection, respectively.

Notice that we will always identify $M^n$ with its immersed
image in $\tilde{M}^n(4\tilde{c})$. As $M^n$ is Lagrangian,
we have that the almost complex structure $J$ interchanges
the tangent and the normal spaces. Moreover, since $J$ is
parallel, we deduce that
\begin{equation}\label{eqn:2.3}
\nabla^{\perp}_XJY=J\nabla_XY,\quad A_{JX}Y=-Jh(X,Y)=A_{JY}X.
\end{equation}
The last formula implies that the cubic form $g(h(X,Y),JZ)$ is
totally symmetric. The minimality condition on $M^n$ means that
${\it trace}\,h=0$, and one may notice that this is equivalent to
${\it trace}A_J=0$.

A straightforward computation shows that the equations of Gauss,
Codazzi and Ricci are
\begin{equation}\label{eqn:2.4}
R(X,Y)Z=\tilde{c}\,(\langle Y,Z \rangle X-\langle X,Z \rangle Y)+
[A_{JX},A_{JY}]Z,
\end{equation}
\begin{equation}\label{eqn:2.5}
(\nabla h)(X,Y,Z)=(\nabla h)(Y,X,Z),
\end{equation}
\begin{equation}\label{eqn:2.6}
R^\perp(X,Y)JZ=\tilde{c}( \langle Y,Z \rangle JX-\langle X,Z \rangle
JY )+J[A_{JX},A_{JY}]Z,
\end{equation}
where $X,Y,Z$ are tangent vector fields  and the covariant
derivative of $h$ is given by
\begin{equation}\label{eqn:2.7}
(\nabla h)(X,Y,Z)=\nabla^{\perp}_X(h(Y,Z))-h(\nabla_XY,Z)-h(Y,\nabla_XZ).
\end{equation}

Moreover, the following Ricci identity holds:
\begin{align}\label{eqn:2.8}
\begin{array}{l}
(\nabla^2 h)(X,Y,Z,W)-(\nabla^2 h)(Y,X,Z,W)\\[1mm]
\ \ \ \ =JR(X,Y)A_{JZ}W-h(R(X,Y)Z,W)-h(R(X,Y)W,Z),
\end{array}
\end{align}
where $X,Y,Z,W$ are tangent vector fields and
\begin{align}\label{eqn:2.9}
\begin{split}
(\nabla^2 h)(W, X,Y,Z)=&
\nabla^{\perp}_W(( \nabla h)(X,Y,Z))-(\nabla h)(\nabla _WX,Y,Z)\\
&-(\nabla h)(X,\nabla _WY,Z) -(\nabla h)(X,Y,\nabla _WZ).
\end{split}
\end{align}

In the following, we will prove an additional relation that is very
useful in our computations. To do so, we will make use of the
technique introduced in \cite{ALVW}, as the {\it Tsinghua
Principle}. First, take the covariant derivative in \eqref{eqn:2.5} with
respect to $W$, and use \eqref{eqn:2.9} and \eqref{eqn:2.5}, to obtain
straightforwardly that
\begin{equation}\label{eqn:2.10}
(\nabla^2h)(W,X,Y,Z)-(\nabla^2h)(W,Y,X,Z)=0.
\end{equation}

In the above equation we then cyclicly permute the first three
vector fields and express each time the left-hand side of the
equations using the Ricci identity in \eqref{eqn:2.8}. It then follows
that
\begin{equation}\label{eqn:2.11}
\begin{aligned}
    0=&R(W,X)Jh(Y,Z)-Jh(Y, R(W,X)Z)\\
    &+R(X,Y)Jh(W,Z)-Jh(W, R(X,Y)Z)\\
    &+R(Y,W)Jh(X,Z)-Jh(X, R(Y,W)Z).
\end{aligned}
\end{equation}

Furthermore, given [20, Corollary 58, p. 89], we know that
\begin{equation}\label{eqn:2.12}
R(X,Y)Z=c_1(\langle Y_1,Z_1 \rangle X_1-\langle X_1,Z_1
\rangle Y_1)+c_2(\langle Y_2,Z_2 \rangle X_2-\langle X_2,Z_2\rangle Y_2),
\end{equation}
where $X_i,Y_i,Z_i$ are the projections of $X,Y,Z$ on the
$TM_i^{n_i}$ component of $TM^n$, for $i=1,2$, respectively.

We recall the following useful definitions and theorems (see \cite{LW2}):
\begin{definition}\label{def1}
Let $\psi_i: (M_i^{n_i}, g_i)\to\mathbb{CP}^{n_i}(4)$, $i=1, 2$, be
two Lagrangian immersions and let $\tilde{\gamma}=(\tilde{\gamma}_1,
\tilde{\gamma}_2): I\to\mathbb{S}^3(1)\subset \mathbb{C}^2$ be a
Legendre curve. Then $\psi=\Pi(\tilde{\gamma}_1\tilde{\psi}_1;
\tilde{\gamma}_2\tilde{\psi}_2): I\times M_1^{n_1}\times M_2^{n_2} \to\mathbb{
CP}^n(4)$ is a Lagrangian immersion, where $n=n_1+n_2+1$,
$\tilde{\psi}_i: M_i^{n_i}\to\mathbb{S}^{2n_i+1}(1)$ are horizontal
lifts of $\psi_i$, $i=1, 2$, respectively, and $\Pi$ is the Hopf
fibration. We call $\psi$ a warped product Lagrangian immersion of
$\psi_1$ and $\psi_2$. When $n_1$ (or $n_2$) is zero, we call $\psi$
a warped product Lagrangian immersion of $\psi_2$ (or $\psi_1$) and
a point.
\end{definition}

\begin{definition}\label{def2}
In Definition \ref{def1}, when
\begin{equation}\label{eqn:2.13}
\tilde{\gamma}(t) = \Big( r_1 e^{i (\tfrac{r_2}{r_1}at)},r_2e^{i(-\tfrac{r_1}{r_2}at)}\Big),
\end{equation}
where $r_1, r_2$ and $a$ are positive constants with $r_1^2+r_2^2 = 1$, we call
$\psi$ a Calabi product Lagrangian immersion of $\psi_1$ and $\psi_2$.
When $n_1$ (or $n_2$) is zero, we call $\psi$ a Calabi product Lagrangian
immersion of $\psi_2$ (or $\psi_1$) and a point.
\end{definition}

\begin{them}[\cite{LW2}]\label{thm:2.1}
Let $\psi: M^n\to \mathbb{CP}^n(4)$ be a Lagrangian immersion. Then
$\psi$ is locally a Calabi product Lagrangian immersion of an
$(n-1)$-dimensional Lagrangian immersion $\psi_1 : M_1^{n-1}\to
\mathbb{CP}^{n-1}(4)$ and a point if and only if $M^n$ admits two
orthogonal distributions $\mathcal{D}_1$ (of dimension $1$, spanned
by a unit vector field $E_1$) and $\mathcal{D}_2$ (of dimension
$n-1$, spanned by $\{E_2,\ldots, E_n\}$) and there exist two real
constants $\lambda _1$ and $\lambda_2$ such that
\begin{equation}\label{eqn:2.14}
\begin{aligned}
h(E_1,E_1) &= \lambda_1 JE_1,\ h(E_1,E_i) =\lambda_2 JE_i, \ i= 2,\ldots,n,\\
\lambda_1 &\neq 2\lambda_2.
\end{aligned}
\end{equation}
Moreover, a Lagrangian immersion $\psi: M^n\to \mathbb{CP}^n(4)$,
satisfying the above conditions, has the following properties:
\begin{enumerate}
\item $\psi$ is Hamiltonian minimal if and only if $\psi_1$ is Hamiltonian minimal;
\item  $\psi$ is minimal if and only if $\lambda_2 =\pm\frac{1}{\sqrt{n}}$ and
$\psi_1$ is minimal. In this case, up to a reparametrization and a rigid motion
of $\mathbb{CP}^n$, locally we have $M^n=I\times M_1^{n-1}$ and $\psi$ is given by
$\psi = \Pi\circ \tilde{\psi}$ with
\begin{equation*}
\tilde{\psi}(t,p)=\Big(\sqrt{\tfrac{n}{n+1}}e^{i\frac{1}{n+1}t}
\tilde{\psi_1}(p), \sqrt{\tfrac{1}{n+1}}e^{-i\frac{n}{n+1}t}\Big),
\ (t,p)\in I\times M_1^{n-1},
\end{equation*}
where $\Pi$ is the Hopf fibration and $\tilde{\psi}_1 : M_1^{n-1}\to
S^{2n-1}(1)$ is the horizontal lift of $\psi_1$.
\end{enumerate}
\end{them}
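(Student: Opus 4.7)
The plan is to prove the biconditional in two directions and then derive the ``moreover'' properties. The forward (necessity) direction is a direct computation starting from Definition 2. The reverse (sufficiency) direction is the substantial one: one must reconstruct the Calabi product structure from the prescribed shape of $h$, and this is where the hypothesis $\lambda_1\neq 2\lambda_2$ is indispensable. The statements about Hamiltonian minimality and minimality will then fall out of comparing mean curvature vectors of $\psi$ and $\psi_1$.

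For the necessity direction, assume $\psi=\Pi(\tilde\gamma_1\tilde\psi_1;\tilde\gamma_2)$ with $\tilde\gamma$ as in \eqref{eqn:2.13}. I would fix the explicit horizontal lift $\tilde\psi(t,p)=(\tilde\gamma_1(t)\tilde\psi_1(p),\tilde\gamma_2(t))$ on $I\times M_1^{n-1}$, take $E_1$ to be the unit rescaling of $\partial_t$ and $E_2,\dots,E_n$ to be the horizontal lifts of an orthonormal frame on $M_1^{n-1}$. The second fundamental form of $\psi$ descends from that of $\tilde\psi$ in $\mathbb{S}^{2n+1}(1)$ via the Hopf fibration, so differentiating $\tilde\psi$ in the $\partial_t$ and horizontal directions and projecting onto the Lagrangian normal bundle yields directly that $h(E_1,E_1)$ and $h(E_1,E_i)$ are $J$-proportional to $E_1$ and $E_i$ respectively, with explicit constants $\lambda_1,\lambda_2$ in terms of $r_1,r_2,a$; a short check shows $\lambda_1\neq 2\lambda_2$.

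For the sufficiency direction, the core idea is to use Codazzi \eqref{eqn:2.5} together with the total symmetry of the cubic form $\langle h(\cdot,\cdot),J\cdot\rangle$ to compute the relevant connection coefficients of $M^n$ in the adapted frame. Applying $(\nabla h)(E_1,E_i,E_j)=(\nabla h)(E_i,E_1,E_j)$ with $i,j\geq 2$ and substituting the ansatz \eqref{eqn:2.14} produces, after separating the $JE_1$ component, an identity of the form $(\lambda_1-2\lambda_2)\,\langle\nabla_{E_i}E_j,E_1\rangle = (\text{terms involving }E_k(\lambda_2))$, together with analogous identities involving $E_1(\lambda_1), E_1(\lambda_2)$. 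Since $\lambda_1-2\lambda_2\neq 0$ by hypothesis, these force $\lambda_1,\lambda_2$ to be constant along the leaves of $\mathcal{D}_2$ and show that $\mathcal{D}_2$ is a totally geodesic, integrable distribution whose leaves inherit a Lagrangian structure in $\mathbb{CP}^{n-1}(4)$ after passing through the Hopf projection. The integral curves of $E_1$ deliver a curve $\tilde\gamma=(\tilde\gamma_1,\tilde\gamma_2)$ in $\mathbb{S}^3(1)\subset\mathbb{C}^2$; matching the reconstructed horizontal lift against the warped-product ansatz $(\tilde\gamma_1\tilde\psi_1,\tilde\gamma_2)$ and translating the constancy of $\lambda_1,\lambda_2$ into an ODE for $\tilde\gamma$, together with the Legendre condition $\langle\tilde\gamma',i\tilde\gamma\rangle=0$, pins $\tilde\gamma$ down to exactly the form \eqref{eqn:2.13}.

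Once the Calabi product structure is recovered, the mean curvature vector $H$ of $\psi$ decomposes according to $\mathcal{D}_1\oplus\mathcal{D}_2$, with the $\mathcal{D}_1$-component equal to $\tfrac{1}{n}\bigl(\lambda_1+(n-1)\lambda_2\bigr)JE_1$ and the $\mathcal{D}_2$-component a specific multiple of the mean curvature vector of $\psi_1$ lifted by the warping. Hamiltonian minimality, i.e.\ the closedness of the dual $1$-form of $JH$, then decouples between the two factors, giving property (1). Minimality forces the trace condition $\lambda_1+(n-1)\lambda_2=0$; combined with the explicit expressions of $\lambda_1,\lambda_2$ in the parameters $r_1,r_2,a$ obtained in the sufficiency step, this yields $\lambda_2=\pm 1/\sqrt{n}$ and pins down $r_1=\sqrt{n/(n+1)}$, $r_2=\sqrt{1/(n+1)}$, $a=1$, producing the explicit formula for $\tilde\psi(t,p)$ in the statement. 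The principal obstacle is the last part of the sufficiency argument: upgrading ``$\tilde\gamma$ is some Legendre curve in $\mathbb{S}^3$'' to ``$\tilde\gamma$ is precisely the one in \eqref{eqn:2.13}''. This requires careful bookkeeping of the horizontal and vertical components under the Hopf fibration, and uses crucially the constancy of $\lambda_1,\lambda_2$ established via Codazzi.
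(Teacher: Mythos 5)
This theorem is not proved in the paper at all: it is imported verbatim from \cite{LW2} and used as a black box in Section 3, so there is no in-paper argument to measure your proposal against. Judged on its own, your outline follows what is essentially the strategy of the original Li--Wang proof: necessity by direct differentiation of the explicit lift, sufficiency via Codazzi plus the total symmetry of the cubic form, with $\lambda_1\neq 2\lambda_2$ used as the nonvanishing coefficient that lets you solve for the connection components $\langle\nabla_{E_i}E_1,E_j\rangle$, and the minimality statements extracted from ${\rm trace}\,A_J=0$, which correctly gives $\lambda_1+(n-1)\lambda_2=0$. Two remarks. First, a small redundancy: in the version of the statement quoted here $\lambda_1,\lambda_2$ are hypothesized to be \emph{constants}, so there is nothing to ``force'' about their constancy along $\mathcal{D}_2$; what Codazzi must actually deliver is that $\mathcal{D}_1$ is totally geodesic, that $\mathcal{D}_2$ is integrable with umbilical (here in fact totally geodesic) leaves, and that $h$ restricted to $\mathcal{D}_2$ projects to a well-defined second fundamental form for $\psi_1$.

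Second, and more seriously, the step you yourself flag as ``the principal obstacle'' is the one your sketch does not actually carry out, and it is where all the work in \cite{LW2} sits: passing from ``$M^n$ is locally a metric product $I\times M_1^{n-1}$ with the prescribed $h$'' to the existence of a splitting $\mathbb{C}^{n+1}=\mathbb{C}^{n}\oplus\mathbb{C}$ with respect to which the horizontal lift factors as $(\tilde\gamma_1\tilde\psi_1,\tilde\gamma_2)$. Integral curves of $E_1$ live in $M^n\subset\mathbb{CP}^n$; they do not ``deliver a curve in $\mathbb{S}^3\subset\mathbb{C}^2$'' until one has integrated the structure equations of the position vector of the lift $\tilde\psi$ in $\mathbb{C}^{n+1}$ (for instance by showing that suitable constant-coefficient combinations of $\tilde\psi$, $E_1\tilde\psi$ and $J E_1$ are parallel along $M^n$, which is exactly where the constancy of $\lambda_1,\lambda_2$ and the relation $\lambda_1\lambda_2-\lambda_2^2+1=0$ coming from Gauss enter). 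As a proof proposal the architecture is sound and matches the source, but this reconstruction step is asserted rather than proved, so the sufficiency direction is incomplete as written.
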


\begin{them}[\cite{LW2}]\label{thm:2.2}
Let $\psi: M^n\rightarrow \mathbb{CP}^{n}(4)$ be a Lagrangian
immersion. If $M^n$ admits two orthogonal distributions
$\mathcal{D}_1$ (of dimension $1$, spanned by a unit vector field
$E_1$) and $\mathcal{D}_2$ (of dimension $n-1$, spanned by
$\{E_2,\ldots,E_n \}$), and that there exist local functions
$\lambda_1$, $\lambda_2$ such that \eqref{eqn:2.14} holds, then $M^n$ has
parallel second fundamental form if and only if $\psi$ is locally a
Calabi product Lagrangian immersion of a point and an
$(n-1)$-dimensional Lagrangian immersion $\psi_1:M_1^{n-1}\to
\mathbb{CP}^{n-1}(4)$, which has parallel second fundamental form.
\end{them}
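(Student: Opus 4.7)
The plan is to treat Theorem \ref{thm:2.1} as a black box for extracting the Calabi product structure once the local functions $\lambda_1,\lambda_2$ are shown to be constants, and then to translate parallelism of $h$ into parallelism of $h^{(1)}$ by a direct frame computation. The argument naturally splits into three parts: upgrading $\lambda_1,\lambda_2$ to constants, invoking Theorem \ref{thm:2.1}, and identifying $\nabla h$ with $\nabla h^{(1)}$.

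First I would force $\lambda_1,\lambda_2$ to be constants under the assumption $\nabla h=0$. Using \eqref{eqn:2.7} together with \eqref{eqn:2.3} and \eqref{eqn:2.14}, and the fact that $\nabla_XE_1\in\mathcal{D}_2$ (because $|E_1|=1$), a short expansion yields
\begin{equation*}
(\nabla h)(X,E_1,E_1)=X(\lambda_1)JE_1+(\lambda_1-2\lambda_2)J\nabla_XE_1.
\end{equation*}
Setting this to zero and using the hypothesis $\lambda_1\neq 2\lambda_2$ gives $X(\lambda_1)=0$ and $\nabla_XE_1=0$ for every $X$. An analogous computation of $(\nabla h)(X,E_1,E_i)$ for $i\geq 2$, now using $\nabla E_1=0$, simplifies to $X(\lambda_2)JE_i=0$ and so forces $\lambda_2$ to be constant as well.

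With $\lambda_1,\lambda_2$ constant, Theorem \ref{thm:2.1} now applies and identifies $\psi$ locally as a Calabi product of a point and some $(n-1)$-dimensional Lagrangian immersion $\psi_1:M_1^{n-1}\to\mathbb{CP}^{n-1}(4)$; the leaves of $\mathcal{D}_2$ are locally isometric to $M_1^{n-1}$. The total symmetry of $g(h(\cdot,\cdot),J\cdot)$ combined with \eqref{eqn:2.14} forces $h(E_i,E_j)=\lambda_2\delta_{ij}JE_1+\tilde h(E_i,E_j)$ for $i,j\geq 2$, where $\tilde h$ takes values in $J\mathcal{D}_2$ and corresponds to $h^{(1)}$ up to the fixed rescaling dictated by the Legendre curve \eqref{eqn:2.13}. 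A direct expansion of $(\nabla h)(E_k,E_i,E_j)$ for $i,j,k\geq 2$ then shows that the $JE_1$-component simplifies to $-\lambda_2 E_k(\delta_{ij})=0$ by metric compatibility, while the $J\mathcal{D}_2$-component reproduces $(\nabla h^{(1)})(E_k,E_i,E_j)$ up to the same rescaling. Combined with the computations of the previous paragraph, this gives $\nabla h=0\iff\nabla h^{(1)}=0$.

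The main obstacle is the bookkeeping in this last expansion: one must track how the intrinsic covariant derivative on the leaves of the autoparallel distribution $\mathcal{D}_2$ sits inside $\nabla$ on $M^n$ -- the fact that $\nabla E_1=0$ makes the relevant O'Neill-type mixing terms vanish -- and verify that the constant Legendre-curve factors in \eqref{eqn:2.13} pass cleanly through all derivatives. The converse direction requires essentially no new computation: starting from the explicit Calabi product parametrization of Theorem \ref{thm:2.1} and running the identifications in reverse, parallelism of $h^{(1)}$ yields parallelism of $h$. The nondegeneracy assumption $\lambda_1\neq 2\lambda_2$ is used exactly once, in the first paragraph, where it licenses solving for $\nabla_XE_1$.
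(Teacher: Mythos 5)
The paper does not prove Theorem \ref{thm:2.2} at all: it is imported from \cite{LW2} and used as a black box in Section \ref{sect:3}, so there is no in-paper argument to compare yours against. Judged on its own, your outline follows the natural (and presumably the original) route, and the computations you do write out are correct: expanding \eqref{eqn:2.7} with \eqref{eqn:2.3} and \eqref{eqn:2.14} indeed gives $(\nabla h)(X,E_1,E_1)=X(\lambda_1)JE_1+(\lambda_1-2\lambda_2)J\nabla_XE_1$, and since $J\nabla_XE_1\perp JE_1$ the hypothesis $\lambda_1\neq 2\lambda_2$ yields both $X(\lambda_1)=0$ and $\nabla E_1=0$; the subsequent computation $(\nabla h)(X,E_1,E_i)=X(\lambda_2)JE_i$ and the vanishing of the $JE_1$-component of $(\nabla h)(E_k,E_i,E_j)$ (using total symmetry of the cubic form, $\nabla E_1=0$ and metric compatibility) are also right. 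Invoking Theorem \ref{thm:2.1} once $\lambda_1,\lambda_2$ are constant is legitimate.

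The weak point is that the actual content of the equivalence is concentrated in the one sentence you label as bookkeeping: the identification of the $J\mathcal{D}_2$-valued part $\tilde h$ with the second fundamental form $h^{(1)}$ of $\psi_1$ up to a constant factor, and of the leaf connection of $\mathcal{D}_2$ with the Levi-Civita connection of $(M_1^{n-1},g_1)$. Establishing this requires differentiating the composed immersion $\Pi(\tilde\gamma_1\tilde\psi_1;\tilde\gamma_2)$ of Definition \ref{def2}: one checks that the induced metric is $a^2\,dt^2+r_1^2 g_1$ (so the leaves are totally geodesic and \emph{homothetic}, not isometric, to $M_1^{n-1}$, with the same connection only because $r_1$ is constant), and that the tangential and normal components of the ambient derivative along $\mathcal{D}_2$ project under the Hopf fibration onto those of $\psi_1$, producing $h|_{\mathcal{D}_2\times\mathcal{D}_2}=\lambda_2\langle\cdot,\cdot\rangle JE_1+r_1^{-1}h^{(1)}$ after the appropriate identifications. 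The mechanism you cite (constancy of the Legendre-curve coefficients) is exactly why the derivative passes through, but this computation is where the theorem lives and should be carried out explicitly rather than deferred; the same applies to the converse, where $\nabla E_1=0$ and the constancy of $\lambda_1,\lambda_2$ must be read off from the parametrization rather than from the assumption $\nabla h=0$.
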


%===============================================================================
\section{Proof of the Main Theorem}\label{sect:3}

In this section, we study a minimal Lagrangian isometric immersion
into a complex space form: $\psi :M^n\to\tilde M^n$, where
$M^n=M_1^{n_1}(c_1)\times
M_2^{n_2}(c_2)$, $n=n_1+n_2$ and $M_1^{n_1}(c_1)$ (resp.
$M_1^{n_2}(c_2)$) is $n_1$ (resp. $n_2$)-dimensional Riemannian
manifold with constant sectional curvature $c_1$ (resp. $c_2$). We
will prove the Main Theorem stated in introduction.

One should be aware that throughout the paper we will make the following
identifications. As $M^n=M_1^{n_1}\times M_2^{n_2}$, we can write a tangent vector field
$Z(p,q)= (X(p,q), Y(p,q))$ where $X(p,q) \in T_p M_1^{n_1}$ and $Y(p,q) \in T_q M_2^{n_2}$.
In general, the $X$ notation (as well as $X_i$, $1\leq i\leq n_1$) will
denote a vector tangent at $(p,q)\in M^n$, with zero components on $M_2^{n_2}$.
We will also identify $X(p,q) \in T_{p} M_1^{n_1}$ with $(X(p,q), 0) \in T_{(p,q)}(M_1^{n_1}\times M_2^{n_2})$
(and similarly $Y(p,q) \in T_{q} M_2^{n_2}$ with $(0, Y(p,q)) \in T_{(p,q)}(M_1^{n_1}\times M_2^{n_2})$.
Notice that, a priori, it means that $X$, as a vector field  depends on $q$
as well, not only on $p$.  One should have in mind this meaning when reading
$X\in T_pM_1^{n_1}$, respectively $Y\in T_qM_2^{n_2}$.
Nonetheless, a complete understanding will be acquired with the proofs of
Lemmas \ref{lem:3.6} and \ref{lem:3.7}, when we will actually see that due to
our particular choice of basis, $X$ only depends on $p$.

First of all, we consider the case $c_1^2+c_2^2\neq0$. We begin with
the following result.

\begin{lem}\label{lem:3.1}
If $c_1^2+c_2^2\neq0$, then the shape operator $A_J$ vanishes nowhere.
\end{lem}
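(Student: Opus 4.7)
The plan is to argue by contradiction: suppose there is a point $p = (p_1,p_2) \in M^n$ at which the shape operator $A_J$ vanishes, equivalently (via $A_{JX}Y = -Jh(X,Y)$) the whole second fundamental form $h$ vanishes at $p$. The strategy is then to play the intrinsic curvature tensor of the product $M_1^{n_1}(c_1)\times M_2^{n_2}(c_2)$ off against the curvature tensor predicted by the Gauss equation at a point of vanishing $h$.

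First I would substitute $h|_p = 0$ into the Gauss equation \eqref{eqn:2.4}, which collapses to
\begin{equation*}
R(X,Y)Z|_p \,=\, \tilde c\bigl(\langle Y,Z\rangle X - \langle X,Z\rangle Y\bigr),
\end{equation*}
so that the intrinsic curvature tensor at $p$ must be that of a space form of constant curvature $\tilde c$. On the other hand, by \eqref{eqn:2.12}, the intrinsic curvature of a Riemannian product of two space forms is supported on each factor separately. I would then test these two expressions against vectors mixed across the two factors. Since both $n_1, n_2 \geq 1$ (otherwise we are in Ejiri's setting), pick a unit $X \in T_{p_1}M_1^{n_1}$ and a unit $Y \in T_{p_2}M_2^{n_2}$. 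The product formula yields $R(X,Y)X = 0$, whereas the Gauss-derived formula yields $R(X,Y)X = -\tilde c\, Y$, forcing $\tilde c = 0$.

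With $\tilde c = 0$ in hand, I would then restrict $R(X,Y)Z$ to each factor separately. Picking $X,Y \in T_{p_1}M_1^{n_1}$ orthonormal (possible when $n_1 \geq 2$) and comparing \eqref{eqn:2.12} with the collapsed Gauss equation forces $c_1 = \tilde c = 0$; identically $c_2 = 0$. When $n_i = 1$, the value $c_i$ is vacuous and we may take it to be $0$. Either way, $c_1 = c_2 = 0$, contradicting the standing hypothesis $c_1^2 + c_2^2 \neq 0$.

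There is no serious technical obstacle here: the argument is purely pointwise and algebraic, using only the Gauss equation together with the product-of-space-forms identity \eqref{eqn:2.12}, and the Tsinghua-type identity \eqref{eqn:2.11} is not yet needed. The only mild subtlety is the bookkeeping of the low-dimensional cases $n_1=1$ or $n_2=1$, which is handled by observing that in those cases $c_i$ is not genuinely a geometric invariant, so the mixed-vector contradiction $\tilde c = 0$ already closes the argument.
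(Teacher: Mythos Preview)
Your proposal is correct and follows essentially the same approach as the paper: assume $A_J$ vanishes at a point, collapse the Gauss equation \eqref{eqn:2.4} to a constant-curvature form, and compare against the product curvature formula \eqref{eqn:2.12} on mixed and pure vectors to force $\tilde c=c_1=c_2=0$. The only differences are cosmetic (you take the mixed pair first, the paper takes $X_1,X_2$ first), and you explicitly handle the $n_i=1$ case, which the paper tacitly assumes away.
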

\begin{proof}
Assume that $A_J$ vanishes at the point $p \in M^n$. From equation
\eqref{eqn:2.4} it follows that $R(X,Y)Z=\tilde{c}(\langle Y,Z\rangle
X-\langle X,Z\rangle Y)$, which yields that $M^n$ has constant
sectional curvature $\tilde{c}$ at $p$. Moreover, by taking
$X_1,X_2,X_2$ in \eqref{eqn:2.4} and \eqref{eqn:2.12}, we obtain that
$c_1=\tilde{c}$ and then by taking $X,Y,Y$ in \eqref{eqn:2.4} and
\eqref{eqn:2.12}, $X\in T_pM_1^{n_1}$, $Y\in T_pM_2^{n_2}$, we get $\tilde{c}=0$.
Similarly, taking $Y_1,Y_2,Y_2\in T_pM_2^{n_2}$ in \eqref{eqn:2.4} and
\eqref{eqn:2.12}, we get that $c_2=0$. Therefore, we get a
contradiction with $c_1^2+c_2^2\neq0$.
\end{proof}

For $c_1^2+c_2^2\neq0$, if $c_1c_2=0$, without loss of generality,
we may assume that $c_1=0$ and $c_2\neq0$. Therefore, we are left to
consider the following two cases:

\vskip 1mm

\textbf{Case (i)}: $c_1=0$ and $c_2\neq 0$;\ \ \textbf{Case (ii)}:
$c_1\neq 0$ and $c_2\neq 0$.

\vskip 2mm

\subsection{}\label{sect:3.1}~

In this subsection, we will deal with {\bf Case (i)} and prove the
following result.
\begin{them}\label{thm:3.1}
Let $\psi :M^n\to \tilde M^n(4\tilde{c})$ be a minimal Lagrangian
isometric immersion into a complex space form such that
$(M^n,\langle\cdot,\cdot\rangle)=M_1^{n_1}(c_1)\times
M_2^{n_2}(c_2)$ and Case (i) occurs. Then we have
$c_2=\tfrac{n_1+n_2+1}{n_2+1}\tilde{c}>0$, say $\tilde{c}=1$, so the
ambient space is $\mathbb{ CP}^n(4)$ and the immersion is congruent
with
\begin{equation*}
\tfrac{1}{n+1}(e^{iu_1},\ldots,e^{iu_{n_1}}, a e^{i u_{n_1+1}}y_1,
\ldots,a e^{i u_{n_1+1}}y_{n_2+1}),
\end{equation*}
where
\begin{enumerate}
\item $(y_1,y_2,\ldots,y_{n_2+1})$ describes the standard sphere
$\mathbb{S}^{n_2}\hookrightarrow \mathbb{R}^{n_2+1}\hookrightarrow
\mathbb{C}^{n_2+1}$,

\item $a=\sqrt{n_2+1}$,

\item $u_1+\cdots+u_{n_1}+a^2 u_{n_1+1}=0$.
\end{enumerate}
\end{them}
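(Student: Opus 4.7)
The plan is to iteratively recognize $\psi$ as a Calabi product via Theorem \ref{thm:2.1}, peeling off one flat direction of $M_1^{n_1}$ at each step, until one is left with a constant-curvature minimal Lagrangian in $\mathbb{CP}^{n_2}(4\tilde c)$, at which point Ejiri's theorem applies. The outcome of this tower of Calabi reductions then gives the explicit parametrization in the statement.

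First, I would plug mixed combinations of vectors from $T_pM_1^{n_1}$ and $T_pM_2^{n_2}$ into the Gauss equation \eqref{eqn:2.4} using the product curvature formula \eqref{eqn:2.12} with $c_1 = 0$. This gives algebraic commutator identities such as
\begin{equation*}
[A_{JX}, A_{JY}]Z = -\tilde c\,\langle Y, Z\rangle X \quad \text{for } X \in T_pM_1,\; Y,Z \in T_pM_2,
\end{equation*}
together with analogous formulas when all three arguments lie in one factor. These already force $A_J$ to couple the two factors in a rigid way, and combined with the total symmetry of $\langle h(\cdot,\cdot), J\cdot\rangle$ suggest that off-diagonal blocks of $A_{JX}$ must be scalar.

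Next, I would apply the Tsinghua identity \eqref{eqn:2.11}, replacing each $R(\cdot,\cdot)\cdot$ using \eqref{eqn:2.12}, and cycle through choices of $W, X, Y, Z$ with varying assignments to the two factors. The goal is to produce an orthonormal basis $\{E_1, \ldots, E_n\}$ with $E_1 \in T_pM_1^{n_1}$ such that $h(E_1, E_1) = \lambda_1 JE_1$ and $h(E_1, E_i) = \lambda_2 JE_i$ for $i = 2, \ldots, n$. Minimality forces $\lambda_1 + (n-1)\lambda_2 = 0$, and Lemma \ref{lem:3.1} combined with Theorem \ref{thm:2.1}(2) yields $\lambda_2 \neq 0$, hence $\lambda_1 \neq 2\lambda_2$. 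Theorem \ref{thm:2.1} then locally splits $\psi$ as a Calabi product of a point and an $(n-1)$-dimensional minimal Lagrangian $\psi_1: M^{n-1} \to \mathbb{CP}^{n-1}(4)$, where $M^{n-1}$ retains the form $M_1^{n_1-1}(0) \times M_2^{n_2}(c_2')$, with the $M_2$-curvature rescaled by the factor recorded in the explicit Calabi formula.

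Iterating this reduction $n_1$ times leaves a minimal Lagrangian $M_2^{n_2}$ of constant curvature inside $\mathbb{CP}^{n_2}(4)$. Since $c_2 \neq 0$ is preserved up to a positive rescaling, Ejiri's theorem rules out the flat case and identifies the base as the totally geodesic $\mathbb{S}^{n_2} \hookrightarrow \mathbb{CP}^{n_2}(4)$ of curvature $1$. Tracking the Calabi metric rescaling $k/(k+1)$ at each step $k = n_2+1, \ldots, n$ telescopes to give $c_2 = \frac{n_1+n_2+1}{n_2+1}\tilde c > 0$, in particular $\tilde c > 0$, so after normalization $\tilde c = 1$. Unwinding the explicit formula in Theorem \ref{thm:2.1}(2) at each of the $n_1$ stages reproduces the parametrization in the statement, with the linear constraint $u_1+\cdots+u_{n_1}+a^2 u_{n_1+1} = 0$ coming from the iterated horizontal-lift conditions under successive Hopf fibrations. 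The hard part will be the middle step: extracting from the Tsinghua identity a distinguished direction $E_1 \in T_pM_1^{n_1}$ realizing \eqref{eqn:2.14}, and doing so in a frame that keeps the residual product structure $M_1^{n_1-1}(0) \times M_2^{n_2}(c_2')$ manifest for the inductive step — this is precisely the technical content flagged in Remark \ref{rm:1.1}.
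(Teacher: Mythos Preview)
Your overall strategy---Tsinghua identity \eqref{eqn:2.11} to constrain $A_J$, then iterated Calabi reduction via Theorem~\ref{thm:2.1}, with Ejiri at the base---is the paper's strategy. But the organization you propose differs in a way that leaves a genuine gap.

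The paper does \emph{not} alternate ``find one $E_1$, apply Theorem~\ref{thm:2.1}, repeat on the residual''. Instead it constructs the \emph{entire} adapted frame $\{X_1,\ldots,X_{n_1}\}$ on $M^n$ first (Lemma~\ref{lem:3.5}), by an inductive extremization of $f_{(p,q)}(u)=\langle A_{Ju}u,u\rangle$ on successive orthogonal complements, using Gauss to get quadratic eigenvalue equations, a delicate $\varepsilon=\pm1$ dichotomy argument to force all the ``wrong'' roots away, and the implicit function theorem to smooth the pointwise construction into vector fields. Only after this, and after showing $\nabla X_i=0$ and $\nabla h=0$ (Lemmas~\ref{lem:3.6}, \ref{lem:3.7}), does the paper invoke Theorem~\ref{thm:2.1} $n_1$ times in succession; at each stage the residual shape operator (e.g.\ \eqref{eqn:3.77}) is simply read off from the already-constructed frame. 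Your scheme instead requires, after each single Calabi split, that the residual $\psi_1:M^{n-1}\to\mathbb{CP}^{n-1}(4)$ again satisfies the hypotheses of Theorem~\ref{thm:3.1}, i.e.\ that its induced metric is a Riemannian product $M_1^{n_1-1}(0)\times M_2^{n_2}(c_2')$. Theorem~\ref{thm:2.1} only gives $M^n=I\times M^{n-1}$ as a Calabi (warped) product with a rescaled metric on $M^{n-1}$, and you give no argument that the original direct-product splitting of $M^n$ descends through that rescaling. This is exactly the issue you flag as ``the hard part'', but you have not proposed a mechanism; the paper's mechanism is to avoid the question by doing all the frame work upfront.

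Two smaller points. The relation $c_2=\tfrac{n_1+n_2+1}{n_2+1}\tilde c$ is obtained in the paper directly (Lemma~\ref{lem:3.4}) from Gauss on a plane in $TM_2$ together with the identity $\sum_i\mu(X_i)^2=\tfrac{n_1}{n_2+1}\tilde c$ (Lemma~\ref{lem:3.3}), not by telescoping Calabi rescalings; your telescoping argument would itself need the residual-product-structure claim above. And the base case does not require Ejiri: once the full frame is in hand, \eqref{eqn:3.17} and \eqref{eqn:3.28} already give $A^{n_1}_{JY_i}Y_j=0$, so the final factor is totally geodesic on the nose.
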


The proof of Theorem \ref{thm:3.1} consists of several lemmas as
following.

\begin{lem}\label{lem:3.2}
Let $\{X_i\}_{1\le i\le n_1}$ and $\{Y_j\}_{1\le j\le n_2}$ be
orthonormal bases of $M_1^{n_1}(c_1)$ and $M_2^{n_2}(c_2)$,
respectively. Then we have
\begin{equation}\label{eqn:3.1}
\langle A_{JX_i} X_j, Y_k\rangle=0,
\end{equation}
and
\begin{equation} \label{eqn:3.2}
\langle A_{JX_i}Y_j , Y_k\rangle=\left\{
\begin{aligned}
&0,\ {\rm if }\ j\neq k,\\
&\mu (X_i), \ {\rm if }\ j=k,
\end{aligned}
\right.
\end{equation}
where $\mu(X_i)=:\mu_i$ depends only on $X_i$ for each $i=1,\ldots,n_1$.
\end{lem}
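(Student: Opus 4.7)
The plan is to exploit the Tsinghua-principle identity \eqref{eqn:2.11} together with the product curvature formula \eqref{eqn:2.12}. Since $c_1 = 0$, the latter reduces to
\[
R(X,Y)Z = c_2\bigl(\langle Y_2, Z_2\rangle X_2 - \langle X_2, Z_2\rangle Y_2\bigr),
\]
from which the crucial bookkeeping observation is that $R(X,Y)Z$ vanishes whenever any one of $X, Y, Z$ lies entirely in $TM_1^{n_1}$ (the corresponding $(\cdot)_2$ component is zero). This will kill most of the six terms in \eqref{eqn:2.11} for well-chosen inputs.

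For the first assertion \eqref{eqn:3.1}, I would substitute $(W, X, Y, Z) = (Y_k, Y_l, X_i, X_j)$ with $k \neq l$ into \eqref{eqn:2.11}. Because $Z = X_j \in TM_1^{n_1}$, every $R(\cdot,\cdot)X_j$ vanishes, killing the three terms of type $Jh(\cdot, R(\cdot,\cdot)X_j)$. Among the remaining $R(\cdot,\cdot)Jh$ terms, those with $X_i$ in a slot of $R$ are zero as well. The only surviving piece is $R(Y_k, Y_l)\,Jh(X_i, X_j) = 0$; expanding via \eqref{eqn:2.12} and invoking the linear independence of $Y_k$ and $Y_l$ forces $\langle Jh(X_i, X_j), Y_m\rangle = 0$ for every $m$, which is exactly \eqref{eqn:3.1}. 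This step requires $n_2 \ge 2$, but $c_2 \neq 0$ already guarantees this since a one-dimensional Riemannian manifold is automatically flat.

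For the second assertion \eqref{eqn:3.2}, I would substitute $(W, X, Y, Z) = (X_i, Y_k, Y_l, Y_j)$ into \eqref{eqn:2.11}. Every term involving $R(X_i, \cdot)$ or $R(\cdot, X_i)$ vanishes, leaving only
\[
R(Y_k, Y_l)\,Jh(X_i, Y_j) - Jh\bigl(X_i, R(Y_k, Y_l)Y_j\bigr) = 0.
\]
Writing $T_{abc} := \langle A_{JX_a} Y_b, Y_c\rangle$, expanding both curvatures via \eqref{eqn:2.12}, and pairing the resulting vector identity with $Y_m$, I obtain the scalar relation
\[
0 = -T_{ijl}\delta_{km} + T_{ijk}\delta_{lm} + \delta_{lj} T_{ikm} - \delta_{kj} T_{ilm}.
\]
Specializing $(l,m) = (j,j)$ with $k \neq j$ gives $2T_{ijk} = 0$, so $T_{ijk} = 0$ for $j \neq k$; specializing $(k,m) = (j,l)$ with $l \neq j$ gives $T_{ijj} = T_{ill}$, so the diagonal $T_{ijj}$ is independent of $j$ and may be called $\mu_i = \mu(X_i)$.

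The main obstacle is purely the bookkeeping: of the six terms in \eqref{eqn:2.11}, one must carefully identify which vanish under the simplified curvature, and then in the second step, choose the right specialization of $(j,k,l,m)$ in the scalar identity to extract both the off-diagonal vanishing and the diagonal $j$-independence. Once those choices are made, each conclusion falls out in a single line.
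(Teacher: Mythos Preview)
Your proof is correct and follows essentially the same route as the paper: both substitute the same (up to relabeling) quadruples into \eqref{eqn:2.11}, use the simplified product-curvature \eqref{eqn:2.12} with $c_1=0$ to isolate the single surviving pair of terms, and then specialize indices to read off \eqref{eqn:3.1} and \eqref{eqn:3.2}. The only cosmetic difference is that you first pair with $Y_m$ to obtain a scalar four-index identity before specializing, whereas the paper specializes directly at the vector level; your choice $(l,m)=(j,j)$ for the off-diagonal vanishing is in fact slightly cleaner since it works uniformly for $n_2\ge 2$.
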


\begin{proof}
Expressing \eqref{eqn:2.11} for $X=Y_k, Y=Y_l, Z=X_i, W=X_j$, $k\neq l$,
and using \eqref{eqn:2.12}, we see that there is only one term
remaining in the right-hand side: $0=R(Y_k,Y_l)A_{JX_i} X_j$. Using
\eqref{eqn:2.12} again, we get
\begin{equation}\label{eqn:3.3}
0=\langle Y_l, A_{JX_i} X_j \rangle Y_k-\langle Y_k, A_{JX_i} X_j
\rangle Y_l.
\end{equation}
It follows immediately the assertion \eqref{eqn:3.1} that
\begin{equation}\label{eqn:3.4}
\langle Y_l, A_{JX_i} X_j\rangle=0,\ \ 1\leq i,j\leq n_1, \ 1\leq l\leq n_2.
\end{equation}

For the second relation, we proceed similarly by choosing in
\eqref{eqn:2.11}: $X=Y_m$, $Y=X_i$, $Z=Y_l$, $W=Y_k$, we obtain
\begin{equation}\label{eqn:3.5}
0=-c_2(\langle A_{JX_i} Y_l,Y_m\rangle Y_k-\langle A_{JX_i} Y_l,
Y_k\rangle Y_m-\delta_{ml}A_{JX_i} Y_k+\delta_{kl}A_{JX_i} Y_m).
\end{equation}

In \eqref{eqn:3.5}, let $k,l,m$ be distinct, then we get
\begin{equation}\label{eqn:3.6}
\langle A_{JX_i} Y_l,Y_m\rangle=0,\ 1\leq i\leq n_1,\ 1\leq l, m\leq
n_2,\ l\not=m.
\end{equation}

Again in \eqref{eqn:3.5}, let assume that $l=m\neq k$, then we
have
\begin{equation}\label{eqn:3.7}
\langle A_{JX_i} Y_l,Y_l\rangle=\langle A_{JX_i} Y_k,Y_k\rangle, \
1\leq i\leq n_1,\ 1\leq l,k\leq n_2,\ l\neq k.
\end{equation}

By \eqref{eqn:3.4}, \eqref{eqn:3.6} and \eqref{eqn:3.7},
we have $\mu(X_i)$ depends only on $X_i$ such that
\begin{equation*}
 A_{JX_i} Y_l=  \mu (X_i)Y_l,\ 1\leq i\leq n_1,\ 1\leq l\leq n_2.
\end{equation*}

Then the assertion \eqref{eqn:3.2} immediately follows.
\end{proof}

\begin{lem}\label{lem:3.3}
Let $\{X_i\}_{1\le i\le n_1}$ be an orthonormal basis in the tangent
space of $M_1^{n_1}$ at a point. Then it holds that
\begin{equation}\label{eqn:3.8}
\mu(X_1)^2+\cdots+\mu(X_{n_1})^2=\tfrac{n_1}{n_2+1}\tilde c.
\end{equation}
\end{lem}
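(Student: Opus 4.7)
The plan is to extract the identity from the Gauss equation \eqref{eqn:2.4} applied to a triple $(X_i, Y_k, X_j)$ that mixes the two factors, together with minimality. The key observation that simplifies everything is that, under the hypotheses of Lemma \ref{lem:3.2}, the symmetry $A_{JX}Y = A_{JY}X$ in \eqref{eqn:2.3} forces
\[
A_{JY_k}X_i = A_{JX_i}Y_k = \mu(X_i)\,Y_k,
\]
so each $A_{JY_k}$ sends $TM_1$ into the line $\mathbb{R}\,Y_k\subset TM_2$, while each $A_{JX_i}$ acts as $\mu(X_i)\,\mathrm{Id}$ on $TM_2$ and preserves the splitting $TM^n=TM_1\oplus TM_2$. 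This turns the commutator $[A_{JX_i},A_{JY_k}]$ into something entirely readable.

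Next I would evaluate \eqref{eqn:2.4} on $(X,Y,Z)=(X_i,Y_k,X_j)$. From \eqref{eqn:2.12} the intrinsic term vanishes, $R(X_i,Y_k)X_j=0$, so the Gauss equation reduces to
\[
0=-\tilde c\,\delta_{ij}\,Y_k+[A_{JX_i},A_{JY_k}]X_j.
\]
Using the structural remarks above, $A_{JX_i}A_{JY_k}X_j=\mu(X_i)\mu(X_j)\,Y_k$, and if we write $A_{JX_i}X_j=\sum_l \alpha_{ij}^l X_l$ with $\alpha_{ij}^l=\langle A_{JX_i}X_j,X_l\rangle$ (the $TM_2$-component being absent by \eqref{eqn:3.1}), then $A_{JY_k}A_{JX_i}X_j=\bigl(\sum_l\alpha_{ij}^l\mu(X_l)\bigr)Y_k$. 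Comparing coefficients of $Y_k$ yields the scalar relation
\[
\mu(X_i)\mu(X_j)-\sum_{l=1}^{n_1}\alpha_{ij}^l\mu(X_l)=\tilde c\,\delta_{ij}.
\]

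The final step is to take $i=j$ and sum over $i$. By total symmetry of the cubic form $\langle h(\cdot,\cdot),J\cdot\rangle$, we have $\sum_i\alpha_{ii}^l=\sum_i\langle A_{JX_l}X_i,X_i\rangle=\mathrm{trace}(A_{JX_l}|_{TM_1})$. Minimality, combined with $\mathrm{trace}(A_{JX_l}|_{TM_2})=n_2\mu(X_l)$ from \eqref{eqn:3.2}, forces $\mathrm{trace}(A_{JX_l}|_{TM_1})=-n_2\mu(X_l)$. Substituting,
\[
\sum_{i=1}^{n_1}\mu(X_i)^2+n_2\sum_{l=1}^{n_1}\mu(X_l)^2=n_1\tilde c,
\]
which rearranges to the desired identity \eqref{eqn:3.8}.

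There is no real obstacle here — the computation is purely algebraic once one recognizes that $A_{JY_k}$ maps $TM_1$ into $\mathbb{R}Y_k$. The only care needed is invoking cubic-form symmetry at the right moment to convert $\sum_i\alpha_{ii}^l$ into a trace, and then using minimality to express that trace in terms of $\mu$. As a sanity check, $\mu$ is a linear functional on $TM_1$, so $\sum_i\mu(X_i)^2$ is its squared norm and is independent of the chosen orthonormal basis, consistent with the statement.
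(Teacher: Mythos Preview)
Your proof is correct and follows essentially the same approach as the paper: apply the Gauss equation \eqref{eqn:2.4} to a mixed triple, use Lemma \ref{lem:3.2} to simplify the commutator, and invoke minimality together with the symmetry of the cubic form to convert the remaining trace into $-n_2\mu(X_l)$. The only cosmetic difference is that the paper plugs in $(X_i,Y_j,Y_j)$ and pairs with $X_i$ (i.e., computes the sectional curvature of a mixed plane), whereas you plug in $(X_i,Y_k,X_j)$ and read off the $Y_k$-component; the resulting scalar identities and the summation argument are identical.
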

\begin{proof}
We compute the sectional curvature $K(\pi(X_i,Y_j))$ of the plane $\pi$
spanned by $X_i$ and $Y_j$, for some fixed $i=1,\ldots,n_1$ and some fixed
$j=1,\ldots,n_2$. We use on the one hand \eqref{eqn:2.12} and on the other
hand \eqref{eqn:2.4} together with \eqref{eqn:3.2} to obtain
\begin{align*}
0=&\tilde c + \langle A_{JY_j}Y_j, A_{JX_i}X_i \rangle-
\langle A_{JX_i}Y_j, A_{JY_j}X_i \rangle\\
=&\tilde c - \mu(X_i)^2+\langle A_{JY_j}Y_j, A_{JX_i}X_i \rangle,\ \
1\le i\le n_1,\ 1\le j\le n_2.
\end{align*}
Taking summation over $i=1,\ldots,n_1$, and using Lemma \ref{lem:3.2},
we get
\begin{equation}\label{eqn:3.9}
\begin{aligned}
0&=n_1\tilde c -\sum_{i=1}^{n_1}\mu(X_i)^2+\langle A_{JY_j} Y_j, \sum_{i=1}^{n_1}A_{JX_i}X_i \rangle\\
&=n_1\tilde c
-\sum_{i=1}^{n_1}\mu(X_i)^2+\sum_{k=1}^{n_1}\sum\limits_{i=1}^{n_1}
\langle A_{JX_k} X_i, X_i \rangle \mu(X_k).
\end{aligned}
\end{equation}
However, the minimality condition implies that for each $k=1,\ldots,n_1$ we have
\begin{equation}\label{eqn:3.10}
0=\sum_{i=1}^{n_1} \langle A_{JX_k} X_i, X_i \rangle +
\sum_{j=1}^{n_2} \langle A_{JX_k} Y_j, Y_j \rangle=\sum_{i=1}^{n_1}
\langle A_{JX_k} X_i, X_i\rangle+n_2\mu(X_k).
\end{equation}

Therefore, from \eqref{eqn:3.9} and \eqref{eqn:3.10}, we obtain
\begin{equation}\label{eqn:3.11}
\mu(X_1)^2+\cdots+\mu(X_{n_1})^2=\tfrac{n_1}{n_2+1}\tilde c.
\end{equation}

This completes the proof of Lemma \ref{lem:3.3}.
\end{proof}

Next, we will describe the construction of a local frame of vector
fields for which we can determine the values of the  shape operator
$A_{J}$. This is a crucial step and will be stated in Lemma
\ref{lem:3.5}. Let us describe first a general method for choosing
suitable orthonormal vectors at a point on $M^n$, which will be used
recurrently in the proof of Lemma \ref{lem:3.5}. The main idea
originates from the very similar situation in studying affine
hyperspheres in \cite{CHMV,HLV,VLS}.

Let $(p,q) \in M^n$ and $U_pM_1^{n_1}=\{ u\in T_pM_1^{n_1}| \langle
u,u\rangle=1\}$. As the metric on $M_1^{n_1}$ is positive definite,
we have that $U_pM_1^{n_1}$ is compact. We define on this set the
functions
\begin{equation}\label{eqn:3.12}
f_{(p,q)}(u)=\langle A_{Ju}u,u\rangle,\ u\in U_pM_1^{n_1}.
\end{equation}
We know that there exists $e_1\in U_pM_1^{n_1}$ for which $f_{(p,q)}$
attains an absolute maximum: $f_{(p,q)}(e_1)= \langle
A_{Je_1}e_1,e_1\rangle=:\lambda_1 $. Let $u\in U_pM_1^{n_1}$ such
that $\langle u, e_1\rangle=0$ and define $g(t)= f_{(p,q)}(\cos(t)
e_1+\sin(t) u )$. One may check that
\begin{align}
g'(0)&=3 \langle A_{Je_1}e_1,u\rangle,\label{eqn:3.13}\\
g''(0)&=6 \langle A_{Je_1}u,u\rangle- 3f_{(p,q)}(e_1).\label{eqn:3.14}
\end{align}

Since $g$ attains an absolute maximum for $t=0$,
we have that $g'(0)=0$ and $g''(0)\leq 0$, i.e.
\begin{equation}\label{eqn:3.15}
\left\{
\begin{aligned}
&\langle A_{Je_1}e_1,u \rangle=0,\\
&\langle A_{Je_1}e_1,e_1\rangle\geq2\langle A_{Je_1}u,u
\rangle,\ u\perp e_1,\ \langle u,u \rangle=1.
\end{aligned}
\right.
\end{equation}
Therefore, $e_1$ is an eigenvector of $ A_{Je_1} $ with $\lambda_1$
the corresponding eigenvalue. Since $A_{Je_1}$ is self-adjoint, we
can further choose orthonormal vectors $e_2, \ldots, e_{n_1}$, which
are eigenvectors of $A_{Je_1}$, with respectively the eigenvalues
$\lambda_{2},\ldots, \lambda_{n_1}$. To sum up, we have
\begin{equation}\label{eqn:3.16}
A_{Je_1}e_i=\lambda_{i} e_i,\ i=1,\ldots, n_1;\ \ \lambda_{1} \geq
2\lambda_{i}\ \ {\rm for}\ \ i\ge2.
\end{equation}

\begin{lem}\label{lem:3.4}
Let $(p,q)\in M_1^{n_1}\times M_2^{n_2}$ and $\{X_i\}_{1\le i\le
n_1}$ and $\{Y_j\}_{1\le j\le n_2}$ be arbitrary orthonormal bases of
$T_pM_1^{n_1}$ and $T_qM_2^{n_2}$, % determined as above,
respectively. Then
\begin{equation}\label{eqn:3.17}
A_{JY_j}Y_k=(\mu_1 X_1+\cdots+\mu_{n_1} X_{n_1})\delta_{jk},\ \ 1\le
j,k\le n_2,
\end{equation}
where $\mu_i:=\mu(X_i)$ with $\mu$ defined as before. Moreover, we have
$c_2=\frac{n_1+n_2+1}{n_2+1}\tilde{c}$.
\end{lem}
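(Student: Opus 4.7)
The plan is to decompose $A_{JY_j}Y_k$ into its $TM_1$- and $TM_2$-components, treat each separately, and then read off $c_2$ from the Gauss equation applied to a $(Y_j,Y_k)$-plane. The $TM_1$-component comes immediately from the total symmetry of the cubic form $(U,V,W)\mapsto\langle h(U,V),JW\rangle$ combined with Lemma \ref{lem:3.2}:
\[
\langle A_{JY_j}Y_k,X_i\rangle=\langle A_{JX_i}Y_j,Y_k\rangle=\mu_i\delta_{jk}.
\]
Writing $A_{JY_j}Y_k=\delta_{jk}\sum_i\mu_iX_i+\sum_lT_{jkl}Y_l$, the tensor $T$ is totally symmetric in $j,k,l$ (again by cubic form symmetry) and trace-free, $\sum_kT_{jkk}=0$: the latter is the minimality of $A_{JY_j}$ together with $\langle A_{JY_j}X_i,X_i\rangle=\langle A_{JX_i}X_i,Y_j\rangle=0$ from \eqref{eqn:3.1}. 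The substantive task is then to show $T\equiv 0$.

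For this I would feed several instances of the Tsinghua identity \eqref{eqn:2.11} with arguments chosen from $\{Y_j,Y_k,Y_l\}$ into the machinery. Because $c_1=0$, the product curvature formula \eqref{eqn:2.12} makes $R$ vanish on any pair involving a $TM_1$-vector, so only the $c_2$-curvature on the $TM_2$-factor survives and every piece of \eqref{eqn:2.11} reduces to an explicit linear combination of the $Y_\bullet$'s whose coefficients are simple polynomials in the $T_{abc}$. Choosing $(W,X,Y,Z)$ successively as four-tuples in which one slot is repeated (for instance $(Y_l,Y_j,Y_k,Y_j)$) and letting $(j,k,l)$ range over all pairwise distinct triples should yield enough linear relations among the two-index components $T_{jkk}$ to force, in combination with total symmetry and the trace-free condition, every $T_{abc}=0$.

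Once $T\equiv 0$, cubic form symmetry also yields $A_{JY_j}X_i=\mu_iY_j$, and a direct computation gives
\[
[A_{JY_j},A_{JY_k}]Y_l=\Big(\sum_i\mu_i^2\Big)(\delta_{kl}Y_j-\delta_{jl}Y_k).
\]
Comparing this against the Gauss equation \eqref{eqn:2.4} together with \eqref{eqn:2.12} --- which says $R(Y_j,Y_k)Y_l=c_2(\delta_{kl}Y_j-\delta_{jl}Y_k)$ --- produces $\sum_i\mu_i^2=c_2-\tilde c$, and Lemma \ref{lem:3.3} identifies the left side as $\tfrac{n_1}{n_2+1}\tilde c$, which rearranges to $c_2=\tfrac{n_1+n_2+1}{n_2+1}\tilde c$.

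The principal obstacle is the vanishing of $T$: a generic instantiation of \eqref{eqn:2.11} collapses to $0=0$ because the total symmetry of $T$ and the skew-symmetry of $R$ align to cancel everything, and one has to hand-pick choices of $(W,X,Y,Z)$ that repeat a slot in order to break the collapse. The low-dimensional regimes $n_2\le 2$, where no third distinct index $l$ is available, need separate attention: for $n_2=1$ the trace-free condition $\sum_kT_{jkk}=0$ alone forces $T=0$, while for $n_2=2$ one can combine the algebraic identity $\sum_jB_j^2=(n_2-1)\big(\tfrac{n_1+n_2+1}{n_2+1}\tilde c-c_2\big)I$ (derived from the Gauss equation, with $B_j$ the $TM_2$-block of $A_{JY_j}$) --- whose left side is positive-semidefinite --- with a direct computation of $B_1^2+B_2^2$ in terms of the surviving components of $T$ to pin down $T=0$ and hence the stated value of $c_2$ simultaneously.
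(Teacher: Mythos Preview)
Your primary strategy for showing $T\equiv 0$ does not work: the identity \eqref{eqn:2.11} with all four slots in $TM_2$ is \emph{identically} vacuous, not merely generically so. To see this, take $W=Y_a,\ X=Y_b,\ Y=Y_c,\ Z=Y_d$ and use $R(Y_\alpha,Y_\beta)Y_\gamma=c_2(\delta_{\beta\gamma}Y_\alpha-\delta_{\alpha\gamma}Y_\beta)$ together with $R(Y_\alpha,Y_\beta)X_i=0$. A direct expansion shows that the $TM_1$-part of \eqref{eqn:2.11} is a cyclic sum $c_2\sum_{\mathrm{cyc}}(\delta_{bd}\delta_{ca}-\delta_{ad}\delta_{cb})\sum_i\mu_iX_i=0$, while the $TM_2$-part reduces (after the $\sum_\gamma T_{\bullet\bullet\gamma}Y_\gamma$ terms cancel in pairs by the symmetry of $T$) to
\[
-c_2\Big[(T_{cda}-T_{adc})Y_b+(T_{adb}-T_{bda})Y_c+(T_{bdc}-T_{cdb})Y_a\Big]=0,
\]
again identically, by the total symmetry of $T$. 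So no choice of indices --- repeated or not --- yields a nontrivial linear relation. The same collapse occurs if you put one argument in $TM_1$: since $c_1=0$ forces $R(X_i,Y_\bullet)=0$, two of the three cyclic pairs vanish outright and the surviving pair cancels within itself because $A_{JX_i}Y_c=\mu_iY_c$ is already diagonal. In short, the Tsinghua identity sees only the commutator structure of the $A_{JY_j}$'s against the curvature, and the total symmetry of $T$ makes that commutator invisible.

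The paper's argument is of a different nature. It proceeds by contradiction: assume $T\not\equiv 0$ and pick $Y_1$ maximizing $\langle A_{Ju}u,u\rangle$ on $U_qM_2^{n_2}$. Diagonalizing the $TM_2$-block of $A_{JY_1}$ and feeding $X=Z=Y_1,\ Y=Y_i$ into the Gauss equation \eqref{eqn:2.4} gives a quadratic for the eigenvalues $\alpha_i$, which together with ${\rm trace}\,A_{JY_1}=0$ pins $\alpha_1$ down to a \emph{finite} set of values. An implicit-function argument extends $Y_1$ to a smooth local vector field still realizing the maximum, and then the Codazzi equation \eqref{eqn:2.5} forces $\nabla_{Y_i}Y_1=0$ for all $i$, whence $R(Y_1,Y_2)Y_1=0$, contradicting $c_2\neq 0$. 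Thus the key input is the \emph{Codazzi} equation (not the Tsinghua identity) applied to a carefully chosen extremal frame, and the contradiction comes from the curvature of $M_2$. Your Gauss-based fallback for $n_2=2$ is closer in spirit to what is actually needed, but the general case requires the extremal/Codazzi mechanism rather than a purely algebraic instantiation of \eqref{eqn:2.11}.
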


\begin{proof}
From Lemma \ref{lem:3.2} we know that
\begin{equation*}
A_{JY_j}Y_k=(\mu_1
X_1+\cdots+\mu_{n_1}X_{n_1})\delta_{jk}+\sum_{l=1}^{n_2}\alpha_l^{jk}Y_l,
\end{equation*}
for real numbers $\alpha_1^{jk},\ldots,\alpha_{n_2}^{jk}$.

Now, we claim that $\alpha_l^{jk}=0$ for all possible indexes, or
equivalently,
\begin{equation}\label{eqn:3.18}
\langle A_{JY_j}Y_k, Y_l\rangle=0,\ {\rm for\ any}\ Y_j,Y_k,Y_l\in
T_qM_2^{n_2}.
\end{equation}

We will verify the claim by contradiction.

In fact, if it did not hold, then we could choose a unit vector
$Y_1(p,q)\in U_qM_2^{n_2}$ such that $\alpha_1:=\langle A_{J
Y_1}Y_1,Y_1\rangle>0$ is the maximum of the function $f_{(p,q)}$ defined
on $U_qM_2^{n_2}$.

Define an operator $\mathcal{A}$ on $T_qM_2^{n_2}$ by
$$
\mathcal{A}(Y)=A_{J Y_1}Y-\langle A_{J Y_1}Y,X_1\rangle X_1
-\cdots-\langle A_{J Y_1}Y,X_{n_1}\rangle X_{n_1}.
$$

It is easy to show that $\mathcal{A}$ is self-adjoint and $Y_1$ is
one of its eigenvectors. We can choose orthonormal vectors
$Y_2,\ldots,Y_{n_2}\in U_qM_2^{n_2}$ orthogonal to $Y_1$, which are
the remaining eigenvectors of the operator $\mathcal{A}$, associated
to the eigenvalues $\alpha_2,\ldots,\alpha_{n_2}$ (notice that we have
changed the notation for the corresponding $\alpha_l^{jk}$ for more
simplicity). Therefore, we have
\begin{equation}\label{eqn:3.19}
\left\{
\begin{aligned}
&A_{J Y_1}Y_1=\mu_1X_1+\cdots+\mu_{n_1}X_{n_1}+\alpha_1Y_1,\\
& A_{J Y_1}Y_i=\alpha_iY_i,\ 1< i\leq n_2.
\end{aligned}
\right.
\end{equation}

Taking in \eqref{eqn:2.4} $X=Z=Y_1,Y=Y_i,1<i\leq n_2$, using
\eqref{eqn:3.19} and Lemmas \ref{lem:3.2} and \ref{lem:3.3}, we can
obtain
\begin{equation}\label{eqn:3.20}
\alpha_i^2-\alpha_1\alpha_i-\tfrac{n_1+n_2+1}{n_2+1}\tilde{c}+c_2=0.
\end{equation}
It follows that there exist an integer $n_{2,1}$, $0\leq n_{2,1}\leq
n_2-1$, if necessary after renumbering the basis, such that
\begin{equation}\label{eqn:3.21}
\left\{
\begin{aligned}
&\alpha_2=\cdots=\alpha_{n_{2,1}+1}=\tfrac12\Big(\alpha_1+\sqrt{\alpha_1^2
+4(\tfrac{n_1+n_2+1}{n_2+1}\tilde{c}-c_2)}\,\Big),\\
&\alpha_{n_{2,1}+2}=\cdots=\alpha_{n_2}=\tfrac12\Big(\alpha_1-\sqrt{\alpha_1^2
+4(\tfrac{n_1+n_2+1}{n_2+1}\tilde{c}-c_2)}\,\Big).
\end{aligned}
\right.
\end{equation}

Using Lemma \ref{lem:3.2}, \eqref{eqn:3.19}, \eqref{eqn:3.21} and
${\rm trace}\,A_{J Y_1}=0$, we have
\begin{equation}\label{eqn:3.22}
\alpha_1=\sqrt{\tfrac{4(\frac{n_1+n_2+1}{n_2+1}\tilde{c}
-c_2)}{\big(\tfrac{n_2+1}{n_2-2n_{2,1}-1}\big)^2-1}}.
\end{equation}

Therefore, if there exists a unit vector field $ V\in TM_2^{n_2}$ such
that $A_{JV}V=\lambda V+\mu_1X_1+\cdots+\mu_{n_1}X_{n_1}$, then we
see that
\begin{equation}\label{eqn:3.23}
\lambda\in\left\{\sqrt{\tfrac{4\big(\frac{n_1+n_2+1}{n_2+1}\tilde{c}
-c_2\big)}{\big(\tfrac{n_2+1}{n_2-2n_{2,1}-1}\big)^2-1}}\,\right\}_{0\le
n_{2,1}\le n_2-1}.
\end{equation}

Moreover, $\alpha_1$ is the absolute maximum of $f_{(p,q)}$ if and only if
\begin{equation}\label{eqn:3.24}
\alpha_1=\sqrt{\tfrac{4\big(\tfrac{n_1
+n_2+1}{n_2+1}\tilde{c}-c_2\big)}{\big(\tfrac{n_2+1}{n_2-1}\big)^2-1}},\
\ {\rm corresponding\ to} \ n_{2,1}=0.
\end{equation}

Next, we show that if $f_{(p,q)}$ attains an absolute maximum in $Y_1$, we
can extend $Y_1$ differentiably to a unit vector field which is also
denoted by $Y_1$ on a neighbourhood $U$ of $(p,q)$ such that, at every
point $(p',q')\in U$, $f_{(p',q')}$ attains an absolute maximum in $Y_1(p',q')$.

In order to achieve that purpose, let $\{E_1,\ldots,E_{n_2}\}$ be an
arbitrary differentiable orthonormal basis defined on a
neighbourhood $U'$ of $(p,q)$ such that $E_1(p,q)=Y_1$. Then, we define a
function $\gamma$ by
$$
\gamma:\mathbb{R}^{n_2}\times U'\rightarrow \mathbb{R}^{n_2}:(a_1,\ldots,a_{n_2},(p',q'))\mapsto(b_1,\ldots,b_{n_2}),
$$
$$
b_k=\sum_{i,j=1}^{n_1}a_ia_j\langle
A_{JE_i}E_j,E_k\rangle-\alpha_1a_k,\ 1\le k\le n_2.
$$
Using the fact that $f_{(p,q)}$ attains an absolute maximum in $E_1(p,q)$,
we then obtain that
\begin{equation*}
\begin{aligned}
\tfrac{\partial b_k}{\partial
a_m}(1,0,\ldots,0,{(p,q)})&=2\langle(A_{JE_1{(p,q)}}
E_m{(p,q)},E_k{(p,q)}\rangle-\alpha_1\delta_{km}\\
&=\left\{
\begin{aligned}
&0,\ \ {\rm if}\  k\neq m, \\
&\alpha_1,\  \ {\rm if}\ k=m=1,\\
&2\alpha_k-\alpha_1,\ \ {\rm if}\ k= m>1.
\end{aligned}
\right.
\end{aligned}
\end{equation*}

Since $\alpha_1>0$ and given \eqref{eqn:3.21}, we have
$2\alpha_k-\alpha_1\neq0$ for $k\ge2$. Hence the implicit function
theorem shows that there exist differentiable functions $a_1,\ldots,
a_{n_2}$, defined on a neighbourhood $U$ of $(p,q)$, such that
$$
a_1(p,q)=1,\ a_2(p,q)=0,\ \ldots,\ a_{n_2}(p,q)=0.
$$
Define the local vector field $V$ by
$$
V=a_1E_1+\cdots+a_{n_1}E_{n_1}.
$$
Then we have $V(p,q)=Y_1$ and $A_{JV}V=\alpha_1 V+\mu_1\langle
V,V\rangle X_1 +\cdots+\mu_{n_1}\langle V,V\rangle X_{n_1}$.
Hence
$$
A_{J\tfrac{V}{\sqrt{\langle V,V\rangle}}}\tfrac{V}{\sqrt{\langle V,V\rangle}}
=\tfrac{\alpha_1}{\sqrt{\langle V,V\rangle}}\tfrac{V}{\sqrt{\langle V,V\rangle}}
+\mu_1X_1+\cdots+\mu_{n_1}X_{n_1}.
$$
By \eqref{eqn:3.23}, the continuity of $\tfrac{\alpha_1}{\sqrt{\langle V,V\rangle}}$
and $\langle V,V\rangle(q)=1$, we can derive that $\langle V,V\rangle=1$
identically. Therefore, for any point $(p',q')\in U$, $f_{('p,q')}$ attains an
absolute maximum at $V(p',q')$. Let $Y_1=V$ and take orthonormal vector
fields $Y_2,\ldots,Y_{n_2}$ orthogonal to $Y_1$, then
$\{Y_1,\ldots,Y_{n_1}\}$ is a local basis satisfying
\begin{equation}\label{eqn:3.25}
\left\{
\begin{aligned}
&A_{JY_1}Y_1=\mu_1X_1+\cdots+\mu_{n_1}X_{n_1}+\alpha_1Y_1,\\& A_{JY_1}Y_i=\alpha_iY_i,\ \ 1< i\leq n_2,
\end{aligned}
\right.
\end{equation}
where, $\alpha_1$ is defined by \eqref{eqn:3.24}, and
\begin{equation}\label{eqn:3.26}
\alpha_2= \cdots=\alpha_{n_2}=\tfrac12\big(\alpha_1-\sqrt{\alpha_1^2
+4(\tfrac{n_1+n_2+1}{n_2+1}\tilde{c}-c_2)}\,\big).
\end{equation}

We recall that on the product manifold $M^n$ we know that $\langle \nabla_{Y_i}Y_j,X\rangle =0$,
for $i,j=1,\ldots,n_2$ and $X$ tangent to $M_1$. Applying \eqref{eqn:2.5},
and \eqref{eqn:3.24}--\eqref{eqn:3.26}, we have that
\begin{equation}\label{eqn:3.27}
\nabla_{Y_i}Y_1=0,\ \ 1\leq i\leq n_2.
\end{equation}

Hence, we have $R(Y_1,Y_2)Y_1=0$, a contradiction to the fact that
$c_2\neq0$. This verifies the claim and thus \eqref{eqn:3.17} follows.
Moreover, using \eqref{eqn:2.4}, \eqref{eqn:2.12} and \eqref{eqn:3.17}, we
easily get the relation $c_2=\frac{n_1+n_2+1}{n_2+1}\tilde{c}$.
\end{proof}

\begin{lem}\label{lem:3.5}
In Case (i), we have $\tilde{c}>0$. Moreover, there exist local orthonormal
frames of vector fields $\{X_i\}_{1\le i\le n_1}$ of $M_1^{n_1}$ and
$\{Y_j\}_{1\le j\le n_2}$ of $M_2^{n_2}$, respectively, such that
the operator $A_J$ takes the following form:
\begin{equation}\label{eqn:3.28}
\left\{
\begin{aligned}
A_{JX_1}X_1&=\lambda_{1,1}X_1,\\
A_{JX_i}X_i&=\mu_1 X_1+\cdots+\mu_{i-1} X_{i-1}+\lambda_{i,i}X_i,\ 1< i\leq n_1,\\
A_{JX_i}X_j&=\mu_i X_j,\ 1\leq i<j,\\
A_{JX_i}Y_j&=\mu_i Y_j,\ 1\leq i\leq n_1,\ 1\leq j\leq n_2,
\end{aligned}
\right.
\end{equation}
where $\lambda_{i,i},\, \mu_i$ are constants and satisfy
\begin{equation}\label{eqn:3.29}
\lambda_{i,i}+(n-i)\mu_i=0,\ \ 1\le i\le n_1.
\end{equation}
\end{lem}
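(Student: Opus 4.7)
The plan begins by pinning down the sign of $\tilde c$. Lemma~\ref{lem:3.3} gives $\sum_{i=1}^{n_1}\mu(X_i)^2 = n_1\tilde c/(n_2+1)\ge 0$, hence $\tilde c\ge 0$. Ruling out $\tilde c = 0$: it would force every $\mu(X_i)$ to vanish, Lemma~\ref{lem:3.4} would give $A_{JY_j}Y_k\equiv 0$, and plugging this into \eqref{eqn:2.4} together with \eqref{eqn:2.12} for two orthogonal directions in $T_qM_2^{n_2}$ would yield $c_2 = 0$, a contradiction. The strict positivity $\tilde c>0$ is what will make the discriminants of the quadratics below strictly positive, so that each of them has two distinct real roots of opposite signs.

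I then build the frame by iterating the maximization device of \eqref{eqn:3.12}--\eqref{eqn:3.16}. At step $k$ ($1\le k\le n_1$), assuming that $X_1,\ldots,X_{k-1}$ have been chosen satisfying the first $k-1$ lines of \eqref{eqn:3.28}, I set $U^{(k)}:=\{v\in T_pM_1^{n_1}:v\perp X_1,\ldots,X_{k-1}\}$ and take $X_k\in U^{(k)}$ to maximize $f(u)=\langle A_{Ju}u,u\rangle$ on the unit sphere of $U^{(k)}$; the value at $X_k$ is $\lambda_{k,k}$. The first-order condition, combined with the total symmetry of the cubic form $\langle A_{J\,\cdot}\,\cdot,\,\cdot\rangle$ and the inductive relation $A_{JX_l}X_k=\mu_l X_k$ for $l<k$, identifies the coefficient of $X_l$ in $A_{JX_k}X_k$ as $\mu_l$, yielding the middle line of \eqref{eqn:3.28}. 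A self-adjoint diagonalization of $A_{JX_k}|_{U^{(k)}\cap X_k^\perp}$ produces the orthonormal completion $X_{k+1},\ldots,X_{n_1}$.

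The main obstacle is to show that the common eigenvalue of $A_{JX_k}$ on $U^{(k)}\cap X_k^\perp$ coincides with $\mu(X_k)$. Applying the Gauss equation \eqref{eqn:2.4} to $(X_k,X_j)$ for $j>k$ (using $c_1=0$ in \eqref{eqn:2.12}) reduces, after substituting the form of $A_{JX_k}X_k$, to the quadratic $t^2-\lambda_{k,k}t-(\tilde c+\mu_1^2+\cdots+\mu_{k-1}^2)=0$; the second-order max condition \eqref{eqn:3.14} selects the unique smaller (negative) root, which I call $\mu_k$. The same quadratic is satisfied by $\mu(X_k)$ via the Gauss equation on $(X_k,Y_j)$ together with the inductive hypothesis $\mu(X_l)=\mu_l$ for $l<k$ (through Lemma~\ref{lem:3.4}). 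I rule out the alternative that $\mu_k$ and $\mu(X_k)$ are the two distinct roots as follows: combining it with the trace-free identity $\lambda_{k,k}+(n_1-k)\mu_k+n_2\mu(X_k)=0$ pins down a specific value $\lambda^B_{k,k}$ of $\lambda_{k,k}$, while the correct choice $\mu(X_k)=\mu_k$ would give $\lambda^A_{k,k}=-(n-k)\mu_k$; a direct algebraic computation produces $(\lambda^A_{k,k})^2-(\lambda^B_{k,k})^2=(n-k+2)^2 n_2(n_1-k)(\tilde c+\mu_1^2+\cdots+\mu_{k-1}^2)/[(n-k+1)(n_2+1)(n_1-k+1)]$, strictly positive for $k<n_1$, and a competitor unit vector in $U^{(k)}$ realizing $f=\lambda^A$ (obtained by solving the linear condition on $\mu$ within $U^{(k)}$) then contradicts the maximality of $\lambda_{k,k}$. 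The boundary case $k=n_1$ needs no argument since $U^{(n_1)}\cap X_{n_1}^\perp=\{0\}$ and $\mu_{n_1}$ is then simply defined to be $\mu(X_{n_1})$. Thus $\mu_k=\mu(X_k)$; the trace-free identity reduces to \eqref{eqn:3.29}, and the remaining relations in \eqref{eqn:3.28} follow.

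For constancy of the $\mu_k$ and $\lambda_{k,k}$, substituting $\lambda_{k,k}=-(n-k)\mu_k$ back into the quadratic yields the recursion $(n-k+1)\mu_k^2=\tilde c+\mu_1^2+\cdots+\mu_{k-1}^2$, which with $\mu_1^2=\tilde c/n$ solves in closed form to $\mu_k^2=(n+1)\tilde c/[(n-k+1)(n-k+2)]$, a constant depending only on $\tilde c$, $n$, and $k$. Fixing the sign by $\mu_k<0$ (the smaller root at each step) determines each $\mu_k$, and hence each $\lambda_{k,k}=-(n-k)\mu_k$, as a global constant, as required. As a consistency check, summing the closed form recovers $\sum_{i=1}^{n_1}\mu_i^2=n_1\tilde c/(n_2+1)$, matching Lemma~\ref{lem:3.3}.
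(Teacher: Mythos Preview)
Your overall strategy—iterate the maximization of $f$ on shrinking orthogonal complements, diagonalize, and use the Gauss equation to get a quadratic for the eigenvalues—matches the paper's. The argument that $\tilde c>0$ is fine, and your closed-form recursion for $\mu_k^2$ is a nice check. But there is a genuine gap at the step where you rule out the ``wrong root'' case $\mu(X_k)\ne\mu_k$.

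You correctly observe that if $\mu(X_k)$ were the larger root, the trace identity would force $\lambda_{k,k}=\lambda^B_{k,k}$, and you verify algebraically that $\lambda^A_{k,k}>\lambda^B_{k,k}$. The problem is your claimed contradiction: you assert the existence of ``a competitor unit vector in $U^{(k)}$ realizing $f=\lambda^A$ (obtained by solving the linear condition on $\mu$ within $U^{(k)}$)''. This step is not justified. The number $\lambda^A_{k,k}$ is merely the value $\lambda_{k,k}$ \emph{would} take if Case~A held; nothing you have written produces an actual unit vector $v\in U^{(k)}$ with $\langle A_{Jv}v,v\rangle=\lambda^A_{k,k}$. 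Linearity of $\mu$ lets you prescribe $\mu(v)$ on a hyperplane, but a vector with a prescribed $\mu$-value is not in general a critical point of $f$, and even if it were, there is no reason its $f$-value should equal $\lambda^A_{k,k}$. So the inequality $\lambda^A_{k,k}>\lambda^B_{k,k}$ alone does not contradict maximality.

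The paper closes this gap differently. Assuming the wrong root at step $k$ (their $\varepsilon_{k+1}=+1$), they pass to the \emph{next} maximizer $X_{k+1}$ on $U^{(k)}\cap X_k^\perp$, compute $\lambda_{k+1,k+1}$ explicitly from the trace condition (this time the numerator of the relevant quotient picks up an extra $\lambda_{k,k+1}^2$ term), and then show by a direct inequality between the rational expressions that $\lambda_{k+1,k+1}>\lambda_{k,k}$. Since the maximum over the smaller set cannot exceed the maximum over the larger set, this is the contradiction. Your $\lambda^A_{k,k}$ is not the same quantity as the paper's $\lambda_{k+1,k+1}$, and the latter is the one that is automatically realized by a concrete vector.

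A secondary omission: the lemma asserts a local orthonormal frame of \emph{vector fields}, so smoothness of the $X_i$ must be established. You construct the frame pointwise and argue that the $\mu_k$ and $\lambda_{k,k}$ are constants, but constancy of eigenvalues is not the same as smooth dependence of eigenvectors. The paper handles this carefully via an implicit-function-theorem argument (their Claims~I-(3), I-(4) and II-(3), II-(4)): they show the possible values of $\lambda$ form a finite set and then solve for a smooth extension of the maximizing direction. Given your closed-form computation, you could alternatively argue that the $\lambda_{k,k}$ are pairwise distinct and invoke smooth dependence of simple eigenspaces, but you should say so explicitly.
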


\begin{proof}
We will give the proof by induction on the index $i$ of
$A_{JX_i}$. According to general principles, this consists of two
steps as below.

\vskip 2mm

\noindent {\bf The first step of induction.}

\vskip 1mm
In this step, we should verify the assertion for $i=1$. To do so, we
have to show that, around any given $(p,q)\in M_1^{n_1}\times
M_2^{n_2}$, there exist an orthonormal frame of vector fields
$\{X_i\}_{1\leq i\leq n_1}$ of $TM_1^{n_1}$, $\{Y_j\}_{1\leq i\leq
n_2}$ of $TM_2^{n_2}$, and smooth functions $\lambda_{1,1}$ and
$\mu_1$, so that we have
$$
\left\{
\begin{aligned}
&A_{JX_1}X_1=\lambda_{1,1} X_1,\ \ A_{JX_1}Y_j=\mu_1  Y_j,\ \ 1\le
j\le n_2,\\
&A_{JX_1}X_i=\mu_1 X_i,\ \ 2\leq i\leq n_1,\\
&\lambda_{1,1}+(n-1)\mu_1=0.
\end{aligned}
\right.
$$
The proof of the above conclusion will be divided into four claims
as below.

\vskip 2mm

\noindent\textbf{Claim I-(1)}. {\it Given $(p,q)\in M_1^{n_1}\times
M_2^{n_2}$, there exist orthonormal bases $\{X_i\}_{1\leq i\leq
n_1}$ of $T_pM_1^{n_1}$, $\{Y_j\}_{1\leq i\leq n_2}$ of
$T_qM_2^{n_2}$, and real numbers $\lambda_{1,1}>0$,
$\lambda_{1,2}=\cdots=\lambda_{1,n_1}$ and $\mu_1$, such that the
following relations hold:
$$
\left\{
\begin{aligned}
A_{JX_1}X_1&=\lambda_{1,1} X_1,\ \ A_{JX_1}X_i=\lambda_{1,i} X_i,\
2\le i\le n_1,\\
A_{JX_1}Y_j&=\mu_1 Y_j,\ 1\le j\le n_2.
\end{aligned}
\right.
$$
Moreover, $\lambda_{1,1}$ is the maximum of $f_{(p,q)}$ defined on
$U_{p}M_1^{n_1}$. In particular, $\tilde{c}>0$.}

\begin{proof}[Proof of Claim I-(1)]

First, if for an orthonormal basis $\{X_i\}_{1\leq i\leq n_1}$ and
for any $i,j,k=1,\ldots, n_1$,  $\langle
A_{JX_i}X_j,X_k\rangle=0$  holds, then by the fact $trace A_{JX_i}=0$ and
Lemma \ref{lem:3.2}, we get $\mu_i=0$. This further implies by Lemma
\ref{lem:3.3} that $\tilde{c}=0$. From this, using \eqref{eqn:2.4},
\eqref{eqn:2.12} and Lemma \ref{lem:3.4}, we can compute the sectional
curvature of the section spanned by $Y_1$ and $Y_2$ to obtain that
$c_2=0$, which is a contradiction.

Accordingly, following the idea  described right before Lemma
\ref{lem:3.4}, we can choose a vector $X_1 \in U_pM_1^{n_1}$ such
that $f_{(p,q)}$ on $U_pM_1^{n_1}$ attains its absolute maximum
$\lambda_{1,1}>0$ at $X_1$. Then, we
can choose an orthonormal basis $\{X_i\}_{1\le i\le n_1}$ of
$T_pM_1^{n_1}$ and an arbitrary orthonormal basis $\{Y_j\}_{1\leq
i\leq n_2}$ of $T_qM_2^{n_2}$, such that, for $2\le k\le n_1$,
$A_{JX_1}X_k=\lambda_{1,k} X_k$ and
$\lambda_{1,1}\ge2\lambda_{1,k}$. Moreover, by Lemma \ref{lem:3.2},
$A_{JX_1}Y_j=\mu_1 Y_j$ for $1\le j\le n_2$.

Next, we will show that $\lambda_{1,2}=\cdots=\lambda_{1,n_1}$, and
that $\lambda_{1,1},\lambda_{1,2}$ and $\mu_1$ are all constants
independent of $(p,q)$.

Taking in \eqref{eqn:2.4} that $X=Z=X_1$ and $Y=X_k$ for $k\ge2$, and
using \eqref{eqn:2.12}, we obtain
\begin{equation}\label{eqn:3.30}
\lambda_{1,k}^2-\lambda_{1,1}\lambda_{1,k}-\tilde{c}=0,\ \ 2\leq k\leq n_1.
\end{equation}
As $\tilde{c}\ge0$  by \eqref{eqn:3.11} and
$\lambda_{1,1}\ge2\lambda_{1,k}$ for $2\le k\le n_1$, then
\eqref{eqn:3.30} implies that
\begin{equation}\label{eqn:3.31}
\lambda_{1,2}=\cdots=\lambda_{1,n_1}=\frac{1}{2}\Big(\lambda_{1,1}-\sqrt{\lambda_{1,1}^2+4\tilde{c}}\,\Big).
\end{equation}

Similarly, taking $X=Z=X_1$ and $Y\in U_qM_2^{n_2}$ in \eqref{eqn:2.4}
and using \eqref{eqn:2.12} and Lemma \ref{lem:3.2}, we get
\begin{equation}\label{eqn:3.32}
\mu_1^2-{\mu_1}\lambda_{1,1}-\tilde{c}=0.
\end{equation}
Thus we obtain
\begin{equation}\label{eqn:3.33}
\mu_1
=\frac{1}{2}\Big(\lambda_{1,1}+\varepsilon_1\sqrt{\lambda_{1,1}^2+4\tilde{c}}\Big),\
\varepsilon_1=\pm 1.
\end{equation}
Then, applying ${\it trace}\,A_{JX_1}=0$, we get
\begin{equation}\label{eqn:3.34}
\frac{1}{2}(n+1)\lambda_{1,1}+\frac{1}{2}(\varepsilon_1n_2-n_1+1)\sqrt{\lambda_{1,1}^2+4\tilde{c}}=0.
\end{equation}
It follows that $\varepsilon_1n_2-n_1+1\not=0$ and
\begin{equation}\label{eqn:3.35}
\Big[\Big(\frac{n+1}{\varepsilon_1n_2-n_1+1}\Big)^2-1\Big]\lambda_{1,1}^2=4\tilde{c}.
\end{equation}
Moreover, \eqref{eqn:3.35} shows that $\tilde{c}>0$, and that
\begin{equation}\label{eqn:3.36}
\lambda_{1,1}=2\sqrt{\tfrac{\tilde{c}}{(\tfrac{n+1}{\varepsilon_1n_2-n_1+1})^2-1}}.
\end{equation}
This, together with \eqref{eqn:3.33}, implies that $\lambda_{1,1}$,
$\lambda_{1,2}=\cdots=\lambda_{1,n_1}$ and $\mu_1$ are all constants
independent of $(p,q)$.
\end{proof}

\noindent\textbf{Claim I-(2)}. {\it
$\lambda_{1,2}=\cdots=\lambda_{1,n_1}=\mu_1$ and
$\lambda_{1,1}+(n-1)\mu_1=0$.}

\begin{proof}[Proof of Claim I-(2)]

From \eqref{eqn:3.31} and \eqref{eqn:3.33}, the first assertion is
equivalent to showing that $\varepsilon_1=-1$. Suppose on the
contrary that $\varepsilon_1=1$. Then we have
\begin{equation}\label{eqn:3.37}
\mu_1\lambda_{1,2}=-\tilde{c}.
\end{equation}
Corresponding to the case $c_2\neq0$ we have $n_2\geq2$, then
\eqref{eqn:3.34} implies that
\begin{equation}\label{eqn:3.38}
n_1>n_2+1\geq3.
\end{equation}
We rechoose a vector $X_2\in U_pM_1^{n_1}$, which is orthogonal to
$X_1$ and such that $\lambda_{2,2}=\langle A_{JX_2}X_2,X_2\rangle$
is the maximum of $f_{(p,q)}$ on $\{u\in U_pM_1^{n_1}\,|\,u\perp X_1\}$.

Define $\mathcal{A}$ on $\{u\in T_pM_1^{n_1}\,|\,u\perp X_1\}$ by
$\mathcal{A}(X)=A_{JX_2}X-\langle A_{JX_2}X,X_1\rangle X_1$. It is
easy to show that $\mathcal{A}$ is self-adjoint and $X_2$ is one of
its eigenvectors. We can choose an orthonormal basis
$\{X_3,\ldots,X_{n_1}\}$ for $\{u\in T_pM_1^{n_1}\,|\,u\perp X_1,
u\perp X_2\}$ so that they are the remaining eigenvectors of the
operator $\mathcal{A}$, associated to eigenvalues
$\lambda_{2,3},\ldots,\lambda_{2, n_1}$. In this way, we have
obtained
\begin{equation}\label{eqn:3.39}
A_{JX_2}X_2=\lambda_{1,2}X_1+\lambda_{2,2}X_2,\
A_{JX_2}X_k=\lambda_{2,k}X_k,\ 3\leq k\leq n_1.
\end{equation}
Taking $X=Z=X_2,Y=X_k$ in \eqref{eqn:2.4} and using \eqref{eqn:3.39}
together with \eqref{eqn:2.12}, we obtain
\begin{equation}\label{eqn:3.40}
\lambda_{2,k}^2-\lambda_{2,2}\lambda_{2,k}-\tilde{c}-\lambda_{1,2}^2=0,\ 3\leq k\leq n_1.
\end{equation}
Given that $\lambda_{2,2}\ge 2\lambda_{2,k}$, this implies that
\begin{equation}\label{eqn:3.41}
\lambda_{2,k}=\frac{1}{2}\Big(\lambda_{2,2}-\sqrt{\lambda_{2,2}^2
+4(\tilde{c}+\lambda_{1,2}^2)}\Big),\ 3\le k\leq n_1.
\end{equation}

Similarly, taking $X=Z=X_2$ and $Y\in
U_qM_2^{n_2}$  in \eqref{eqn:2.4} and using \eqref{eqn:3.39} and \eqref{eqn:2.12}, we get
\begin{equation}\label{eqn:3.42}
{\mu_2}^2-{\mu_2}\lambda_{2,2}-\tilde{c}-\mu_1\lambda_{1,2}=0.
\end{equation}
Combining \eqref{eqn:3.37} with \eqref{eqn:3.42} we get
\begin{equation}\label{eqn:3.43}
\mu_2^2-{\mu_2}\lambda_{2,2}=0.
\end{equation}
Therefore, we have
\begin{equation}\label{eqn:3.44}
\mu_2=\frac{1}{2}(\lambda_{2,2}+\varepsilon_2\lambda_{2,2}),\ \varepsilon_2=\pm1.
\end{equation}
By using \eqref{eqn:3.39}, \eqref{eqn:3.41}, \eqref{eqn:3.44} and ${\it trace}\,A_{JX_2}=0$, we have
\begin{equation}\label{eqn:3.45}
\lambda_{2,2}+\frac{1}{2}(n_1-2)\Big(\lambda_{2,2}-\sqrt{\lambda_{2,2}^2
+4(\tilde{c}+\lambda_{1,2}^2)}\,\Big)+\frac12{n_2}(\lambda_{2,2}+\varepsilon_2\lambda_{2,2})=0.
\end{equation}
Hence we have
\begin{equation}\label{eqn:3.46}
\lambda_{2,2}=2\sqrt{\tfrac{\tilde{c}+\lambda_{1,2}^2}{\big(\tfrac{n_1+n_2+\varepsilon_2n_2}{n_1-2}\big)^2-1}}.
\end{equation}
Note that for $\varepsilon_1=1$, \eqref{eqn:3.36} gives
\begin{equation}\label{eqn:3.47}
\lambda_{1,1}=2\sqrt{\tfrac{\tilde{c}}{\big(\tfrac{n_1+n_2+1}{n_1-n_2-1}\big)^2-1}}.
\end{equation}
Using \eqref{eqn:3.38}, we have
\begin{equation*}
\begin{aligned}
\tfrac{n_1+n_2+1}{n_1-n_2-1}-\tfrac{n_1+n_2+\varepsilon_2n_2}{n_1-2}\geq&
\tfrac{n_1+n_2+1}{n_1-n_2-1}-\tfrac{n_1+2n_2}{n_1-2}\\
=&\tfrac{n_1-n_2-1+2(n_2+1)}{n_1-n_2-1}-\tfrac{n_1-2+2n_2+2}{n_1-2}\\
=&\tfrac{2(n_2+1)(n_2-1)}{(n_1-n_2-1)(n_1-2)}>0.
\end{aligned}
\end{equation*}
It follows that $\lambda_{2,2}>\lambda_{1,1}$. This is a
contradiction.

We have proved that  $\varepsilon_1=-1$ and thus
$\lambda_{1,2}=\cdots=\lambda_{1,n_1}=\mu_1$.

Finally, from ${\it trace}\,A_{JX_{1}}=0$ we get
$\lambda_{1,1}+(n-1)\mu_1=0$ as claimed.
\end{proof}

\noindent\textbf{Claim I-(3)}. If there exists a unit vector $V\in
T_pM_1^{n_1}$ such that $A_{JV}V=\lambda V$, then $\lambda$ has only a
finite number of possible values.

\begin{proof}[Proof of Claim I-(3)]

Assume that there exists a unit vector $V\in T_pM_1^{n_1}$ such that
$A_{JV}V=\lambda V$. Let $X_1=V$ and $\lambda_{1,1}=\lambda$, then
we may complete $X_1$ to obtain an orthonormal basis $\{X_i\}_{1\le
i\le n_1}$ of $T_pM_1^{n_1}$ such that, for each $2\le k\le n_1$,
$X_k$ is the eigenvector of $A_{JX_1}$ with eigenvalue
$\lambda_{1,k}$. Then we have \eqref{eqn:3.30}, from which we know the existence of an
integer $n_{1,1}$, $0\leq n_{1,1}\leq n_1-1$, such that, if
necessary after renumbering the basis, we have
\begin{equation}\label{eqn:3.48}
\left\{
\begin{aligned}
&\lambda_{1,2}=\cdots=\lambda_{1,n_{1,1}+1}=\frac{1}{2}
\Big(\lambda_{1,1}+\sqrt{\lambda_{1,1}^2+4\tilde{c}}\,\Big),\\
&\lambda_{1,n_{1,1}+2}=\lambda_{1,n_1}=\frac{1}{2}\Big(\lambda_{1,1}
-\sqrt{\lambda_{1,1}^2+4\tilde{c}}\,\Big).
\end{aligned}
\right.
\end{equation}

Similarly, we have \eqref{eqn:3.33}. By \eqref{eqn:3.48},
\eqref{eqn:3.33} and the fact that ${\it trace}\,A_{JX_1}=0$, we have
\begin{equation}\label{eqn:3.49}
\frac{1}{2}(n_1+n_2+1)\lambda_{1,1}+\frac{1}{2}(2n_{1,1}-n_1+1
+\varepsilon_1n_2)\sqrt{\lambda_{1,1}^2+4\tilde{c}}=0.
\end{equation}
This immediately implies that $\lambda_{1,1}$ has only finite
possibilities.
\end{proof}

\noindent\textbf{Claim I-(4)}. {\it The aforementioned tangent vector
$X_1$ at $(p,q)$ can be extended differentiably to a unit vector field,
still denoted by $X_1$, in a neighbourhood $U$ of $(p,q)$, such that for
each $(p',q')\in U$, $f_{(p',q')}$ defined on $U_{p'}M_1^{n_1}$ attains the
absolute maximum at $X_1(p',q')$.}

\begin{proof}[Proof of Claim I-(4)]

Let $\{E_1,\ldots,E_{n_1}\}$ be an arbitrary differentiable
orthonormal basis defined on a neighbourhood $U'$ of ${(p,q)}$ such that
$E_1{(p,q)}=X_1$. Then, from the fact $A_{JX_1}X_1=\lambda_{1,1}X_1$ at
${(p,q)}$, we define a function $\gamma$ by
$$
\begin{aligned}
\gamma:\ &\mathbb{R}^{n_1}\times U'\rightarrow
\mathbb{R}^{n_1},\\
&(a_1,\ldots,a_{n_1},(p',q'))\mapsto(b_1,\ldots,b_{n_1}),
\end{aligned}
$$
where
$b_k=b_k(a_1,\ldots,a_{n_1}):=\sum\limits_{i,j=1}^{n_1}a_ia_j\langle
A_{JE_i}E_j, E_k\rangle -\lambda_{1,1}a_k$ for $1\le k\le n_1$.

Using the fact that $f_{(p,q)}$ attains an absolute maximum in $E_1(p,q)$,
and that, by Claim I-(1), $A_{JE_1}E_k=\lambda_{1,k} E_k$ at $(p,q)$ for
$2\le k\le n_1$, we have the calculation that
\begin{equation*}
\begin{aligned}
\tfrac{\partial b_k}{\partial a_m}(1,0,\ldots,0,{(p,q)})
&=2\langle A_{JE_1{(p,q)}}E_m{(p,q)},E_k{(p,q)}\rangle -\lambda_{1,1}\delta_{km}\\
&=\left\{
\begin{aligned}
&0,\ \ {\rm if}\  k\neq m, \\
&\lambda_{1,1},\  \ {\rm if}\ k=m=1,\\
&2\lambda_{1,k}-\lambda_{1,1},\ \ {\rm if}\ k=m\ge2.
\end{aligned}
\right.
\end{aligned}
\end{equation*}

Given the fact that $\tilde{c}>0$, by \eqref{eqn:3.31} we have that
$2\lambda_{1,k}-\lambda_{1,1}\neq0$ for $k\ge2$. Hence the implicit
function theorem shows that there exist differentiable functions
$a_1,\ldots, a_{n_1}$, defined on a neighbourhood $U$ of $(p,q)$ and
satisfying
$$
a_1(p,q)=1,\ a_2(p,q)=0,\ \ldots,\ a_{n_2}(p,q)=0,
$$
such that
$$
\left\{
\begin{aligned}
&b_1(a_1(p',q'),\ldots,a_{n_1}(p',q'),
(p',q'))\equiv0,\\
&\ \ \ \ \cdots\\
&b_{n_1}(a_1(p',q'),\ldots,a_{n_1}(p',q'),(p',q'))\equiv0.
\end{aligned}
\right.
$$
Therefore, the local vector field $V$ defined by
%_
$$
V=a_1E_1+\cdots+a_{n_1}E_{n_1}
$$
satisfies $V{(p,q)}=X_1$ and $A_{JV}V=\lambda_{1,1}V$.
Hence
\begin{equation}\label{eqn:3.50}
A_{J{\tfrac{V}{\sqrt{\langle V,V \rangle }}}}\tfrac{V}{\sqrt{\langle
V,V \rangle }} =\tfrac{\lambda_{1,1}}{\sqrt{\langle V,V \rangle
}}\tfrac{V}{\sqrt{\langle V,V \rangle }}.
\end{equation}

According to Claim I-(3), there is a finite number of possible values that the function
$\tfrac{\lambda_{1,1}}{\sqrt{\langle V,V\rangle}}$ can take. On the other hand, since
$\tfrac{\lambda_{1,1}}{\sqrt{\langle V,V \rangle}}$ is continuous
and $\langle V,V \rangle (p,q)=1$, it must be that $\langle
V,V\rangle=1$ identically. Define on $U$ a vector field $X_1:=V$. By
Claim I-(1) and its proof we know that, for any point $(p',q')\in U$,
$f_{(p',q')}$ attains an absolute maximum at $X_1(p',q')$. This verifies the
assertion of Claim I-(4).
\end{proof}
Finally, having determined the unit vector field $X_1$ as in Claim
I-(4), we further choose vector fields $X_2,\ldots,X_{n_1}$ (which
are orthogonal to $X_1$) such that $\{X_i\}_{1\le i\le n_1}$ is a
local orthonormal frame of $TM_1^{n_1}$. Then, combining with Lemma
\ref{lem:3.2}, we  complete immediately the proof for the first
step of induction.

%%%%%%%%%%%%%%%%%%%%%%%%%%%%%%%%%%%%%%%%%%%%%%%

\vskip 3mm

\noindent{\bf The second step of induction.}

\vskip 1mm
In this step, we first assume the assertion of Lemma \ref{lem:3.5}
for all $i\le k$, where $k\in \{2,\ldots,n_1-1\}$ is a fixed
integer. Therefore, there exists a local orthonormal frame of vector
fields $\{X_i\}_{1\le i\le n_1}$ of $M_1^{n_1}$, such that the
operator $A_J$ takes the following form:
\begin{equation}\label{eqn:3.51}
\left\{
\begin{aligned}
&A_{JX_1}X_1=\lambda_{1,1} X_1,\\
&A_{JX_i}X_i=\mu_1 X_1+\cdots+\mu_{i-1} X_{i-1}+\lambda_{i,i}X_i,\ 1< i\leq k,\\
&A_{JX_i}X_j=\mu_i X_j,\ 1\leq i\leq k, \ i<j\leq n_1,    \\
&A_{JX_i}Y=\mu_iY,\ 1\leq i\leq k,\ Y\in TM_2^{n_2},
\end{aligned}
\right.
\end{equation}
where $\mu_i$ and $\lambda_{i,i}$ for $1\le i\le k$ are constants
that satisfy the relations:
\begin{equation}\label{eqn:3.52}
\lambda_{i,i}+(n-i)\mu_i=0,\ \ 1\le i\le k.
\end{equation}
Moreover, for $1\leq i\leq k$ and $(p',q')$ around $(p,q)$, $\lambda_{i,i}$ is
the maximum of $f_{(p',q')}$ defined on
$$
\{u\in T_{p'}M_1^{n_1} \mid \langle u,u\rangle=1, u\perp
X_1,\ldots,X_{i-1}\}.
$$
Then, as purpose of the second step, we should verify the assertion
of Lemma \ref{lem:3.5} for $i=k+1$. To do so, we have to show that
there exists a local orthonormal frame of vector fields
$\{\tilde{X}_i\}_{1\leq i\leq n_1}$ of $TM_1^{n_1}$ given by
$$
\tilde{X}_1=X_1,\ldots,\tilde{X}_k=X_k;\ \
\tilde{X}_l=\sum_{t=k+1}^{n_1}T^t_lX_t,\ k+1\le l\le n_1,
$$
such that $T=(T_l^t)_{k+1\le l,t\le n_1}$ is an orthogonal matrix,
and the operator $A_J$ takes the following form:
\begin{equation}\label{eqn:3.53}
\left\{
\begin{aligned}
&A_{J\tilde{X}_1}\tilde{X}_1=\lambda_{1,1} \tilde{X}_1,\\
&A_{J\tilde{X}_i}\tilde{X}_i=\mu_1 \tilde{X}_1+\cdots+\mu_{i-1} \tilde{X}_{i-1}+\lambda_{i,i}\tilde{X}_i,\ 2\le i\leq k+1,\\
&A_{J\tilde{X}_i}\tilde{X}_j=\mu_i \tilde{X}_j,\ 1\leq i\leq k+1, \ i+1\le j\leq n_1,    \\
&A_{J\tilde{X}_i}Y=\mu_iY,\ 1\leq i\leq k+1,\ Y\in TM_2^{n_2},
\end{aligned}
\right.
\end{equation}
where $\mu_i$ and $\lambda_{i,i}$ for $1\le i\le k+1$ are constants
and satisfy the relations
\begin{equation}\label{eqn:3.54}
\lambda_{i,i}+(n-i)\mu_i=0,\ \ 1\le i\le k+1.
\end{equation}
Moreover, for $1\leq i\leq k+1$ and $(p',q')$ around $(p,q)$, $\lambda_{i,i}$ is the maximum of
$f_{(p',q')}$ defined on
$$
\{u\in T_{p'}M_1^{n_1} \mid \langle u,u\rangle=1, u\perp
\tilde{X}_1,\ldots,u\perp\tilde{X}_{i}\}.
$$

\vskip 2mm
Similarly to the first step, the proof of the above conclusion
will also be divided into the verification of four claims.

\vskip 2mm

\noindent\textbf{Claim II-(1)}. {\it For any $(p,q)\in
M_1^{n_1}\times M_2^{n_2}$, there exists an orthonormal basis
$\{\bar{X}_i\}_{1\leq i\leq n_1}$ of $T_pM_1^{n_1}$ and real numbers
$\lambda_{k+1,k+1}>0$, $\lambda_{k+1,k+2}=\cdots=\lambda_{k+1,n_1}$
and $\mu_{k+1}$, such that the following relations hold:}
\begin{equation*}
\left\{
\begin{aligned}
&A_{J\bar{X}_1}\bar{X}_1=\lambda_{1,1} \bar{X}_1,\\
&A_{J\bar{X}_i}\bar{X}_i=\mu_1 \bar{X}_1+\cdots+\mu_{i-1} \bar{X}_{i-1}+\lambda_{i,i}X_i,\ 2\le i\leq k+1,\\
&A_{J\bar{X}_{k+1}}\bar{X}_i=\lambda_{k+1,i}\bar{X}_i,\,\  i\ge k+2,\\
&A_{J\bar{X}_{k+1}}Y=\mu_{k+1}Y,\ \ Y\in T_qM_2^{n_2}.
\end{aligned}
\right.
\end{equation*}

\begin{proof}[Proof of Claim II-(1)]

By the induction assumption, we have an orthonormal basis
$\{X_i\}_{1\le i\le n_1}$  such that \eqref{eqn:3.51} and \eqref{eqn:3.52} hold.
We first take $\bar{X}_1=X_1(p,q),\ldots,\bar{X}_k=X_k(p,q)$. Then,
putting
$$
V_k=\{u\in T_{p}M_1^{n_1} \mid u\perp \bar{X}_1,\dots,u\perp
\bar{X}_k\},
$$
we will show that, restricting on $U_pM_1^{n_1}\cap V_k$, the
function $f_{(p,q)}\not=0$.

Indeed, suppose on the contrary that $f_{(p,q)}\,|_{V_k}=0$. Then, letting
$\{u_i\}_{k+1\le i\le n_1}$ be an orthonormal basis of $V_k$, we
have $\langle A_{Ju_i}u_{j},u_k\rangle=0$, $k+1\leq i,j,k\leq n_1$.
Taking in \eqref{eqn:2.4} that $X=u_{k+2},Y=Z=u_{k+1}$, by assumption
of induction and Lemma \ref{lem:3.2}, we obtain
$\mu_1^2+\cdots+\mu_{k}^2+\tilde{c}=0$. This is a contradiction to
the fact $\tilde c>0$.

Now, we can choose $\bar{X}_{k+1}$ such that $f_{(p,q)}$, restricted on
$U_pM_1^{n_1}\cap V_k$, attains its maximum with value
$$
\lambda_{k+1,k+1}:=\langle
A_{J\bar{X}_{k+1}}\bar{X}_{k+1},\bar{X}_{k+1}\rangle>0.
$$
Consider the self-adjoint operator $\mathcal{A}:\ V_k\to V_k$
defined by
\begin{equation*}
\mathcal{A}(X)=A_{J\bar{X}_{k+1}}X- \sum_{i=1}^k\langle
A_{J\bar{X}_{k+1}}X,\bar{X}_i \rangle \bar{X}_i.
\end{equation*}
It is easy to see that
$\mathcal{A}(\bar{X}_{k+1})=\lambda_{k+1,k+1}\bar{X}_{k+1}$. Hence,
by the assumption of induction, we have:
\begin{align*}
\lambda_{k+1,k+1}\bar{X}_{k+1}=&A_{J\bar{X}_{k+1}}\bar{X}_{k+1}-\sum_{i=1}^k \langle A_{J\bar{X}_{k+1}}\bar{X}_{k+1},\bar{X}_i\rangle \bar{X}_i\\
=& A_{J\bar{X}_{k+1}}\bar{X}_{k+1}-\sum_{i=1}^k\langle A_{J\bar{X}_{i}}\bar{X}_{k+1},\bar{X}_{k+1}\rangle \bar{X}_i\\
=& A_{J\bar{X}_{k+1}}\bar{X}_{k+1}-\sum_{i=1}^k\mu_i \bar{X}_i.
\end{align*}

Next, we choose $\bar{X}_{k+2}, \ldots,\bar{X}_{n_1}$ as the
remaining unit eigenvectors of $\mathcal{A}$, with corresponding
eigenvalues $\lambda_{k+1,k+2}$, $\ldots$, $\lambda_{k+1,n_1}$,
respectively. Thus, by Lemma \ref{lem:3.2} we have $\mu_{k+1}$, and
the following relations:
\begin{equation}\label{eqn:3.55}
\left\{
\begin{aligned}
&A_{J\bar{X}_{k+1}}\bar{X}_{k+1}=\mu_1 \bar{X}_1+\cdots+\mu_{k} \bar{X}_{k}+\lambda_{k+1,k+1}\bar{X}_{k+1},\\
&A_{J\bar{X}_{k+1}}\bar{X}_i=\lambda_{k+1,i}\bar{X}_i,\,\ k+2\le i\leq n_1,\\
&A_{J\bar{X}_{k+1}}Y=\mu_{k+1}Y,\ \ Y\in T_qM_2^{n_2}.
\end{aligned}
\right.
\end{equation}

Now, taking in \eqref{eqn:2.4} that $X=Z=\bar{X}_{k+1}$ and
$Y=\bar{X}_j$ with $j\ge k+2$, combining with \eqref{eqn:2.12}, we can
obtain
\begin{equation}\label{eqn:3.56}
\lambda_{k+1,j}^2-\lambda_{k+1,
k+1}\lambda_{k+1,j}-\tilde{c}-(\mu_1^2+\cdots+\mu_{k}^2)=0.
\end{equation}
It follows that
\begin{equation}\label{eqn:3.57}
\begin{aligned}
&\lambda_{k+1,k+2}=\cdots=\lambda_{k+1,n_1}\\
&=\frac{1}{2}\Big(\lambda_{k+1,k+1}-\sqrt{\lambda_{k+1,k+1}^2
        +4(\tilde{c}+\mu_1^2+\cdots+\mu_{k-1}^2+\mu_k^2)}\,\Big).
\end{aligned}
\end{equation}

On the other hand, taking in \eqref{eqn:2.4} that $X=Z=\bar{X}_{k+1}$,
and $Y\in T_qM_2^{n_2}$ be a unit vector, combining with
\eqref{eqn:2.12}, we can obtain
\begin{equation}\label{eqn:3.58}
\mu_{k+1}^2-\lambda_{k+1,k+1}\mu_{k+1}-\tilde{c}-(\mu_1^2+\cdots+\mu_{k}^2)=0.
\end{equation}

Hence
\begin{equation}\label{eqn:3.59}
\mu_{k+1}=\frac{1}{2}\Big(\lambda_{k+1,k+1}+\varepsilon_{k+1}
\sqrt{\lambda_{k+1,k+1}^2+4(\tilde{c}+\mu_1^2+\cdots+\mu_{k}^2)}\Big),
\end{equation}
where $\varepsilon_{k+1}=\pm1$.
Then, using that  ${\it trace}\,A_{J\bar{X}_{k+1}}=0$, we get
$n_1-n_2\varepsilon_{k+1}-k-1>0$ and
\begin{align}\label{eqn:3.60}
&\lambda_{k+1,k+1}=2\sqrt{\tfrac{\tilde{c}+\mu_1^2+\cdots
+\mu_{k-1}^2+\mu_k^2}{\big(\tfrac{n_1+n_2-k+1}{n_1-n_2\varepsilon_{k+1}-k-1}\big)^2-1}}.
\end{align}
By the assumption that $\mu_1,\ldots,\mu_k$ are constants we see
that, as claimed, $\lambda_{k+1,k+2}= \cdots=\lambda_{k+1,n_1}$ and
$\mu_{k+1}$ are also constants.
\end{proof}

%%%%%%%%%%%%%%%%%%%%%%%%%%%%%%%%%%%%%%%%%%%%%%%%%%%%%%%%%%%%%%%%%%%%

\noindent\textbf{Claim II-(2)}. {\it
$\lambda_{k+1,k+2}=\cdots=\lambda_{k+1,n_1}=\mu_{k+1}$ and
$\lambda_{k+1,k+1}+(n-k-1)\mu_{k+1}=0$.}

\begin{proof}[Proof of Claim II-(2)]

From \eqref{eqn:3.57} and \eqref{eqn:3.59}, the first assertion is
equivalent to showing that $\varepsilon_{k+1}=-1$. Suppose, on the
contrary, that $\varepsilon_{k+1}=1$. Then we have
\begin{equation}\label{eqn:3.61}
\mu_{k+1}\lambda_{k+1,i}=-(\tilde{c}+\mu_1^2+\cdots+\mu_{k}^2),\ \
i\ge k+2.
\end{equation}

Similar to getting \eqref{eqn:3.60}, now we have
\begin{equation}\label{eqn:3.62}
n_1-n_2-k-1>0
\end{equation}
and
\begin{equation}\label{eqn:3.63}
\lambda_{k+1,k+1}=2\sqrt{\tfrac{\tilde{c}+\mu_1^2+\cdots
+\mu_{k}^2}{\big(\tfrac{n_1+n_2-k+1}{n_1-n_2-k-1}\big)^2-1}}.
\end{equation}
Put
$$
V_{k+1}=\{u\in T_{p}M_1^{n_1} \mid u\perp \bar{X}_1,\dots,u\perp
\bar{X}_{k+1}\}.
$$
Then, a similar argument as in the proof of Claim II-(1) shows that,
restricting on $U_pM_1^{n_1}\cap V_{k+1}$, the function $f_{(p,q)}\not=0$.\\
Now, by a totally similar process as in the proof of Claim II-(1),
we can choose another orthonormal basis $\{X'_i\}_{1\le i\le n_1}$
of $T_pM_1^{n_1}$ with $X'_j=\bar{X}_j$ for $1\le j\le k+1$, such
that $f_{(p,q)}$, restricting on $U_pM_1^{n_1}\cap V_{k+1}$, attains its
maximum $\lambda_{k+2,k+2}>0$ at $X'_{k+2}$ so that
$\lambda_{k+2,k+2} =h(A_{JX'_{k+2}}X'_{k+2},X'_{k+2})$.\\
As before, we define a self-adjoint operator $\mathcal{A}:\
V_{k+1}\to V_{k+1}$ by
\begin{equation*}
\mathcal{A}(X)=A_{JX'_{k+2}}X- \sum_{i=1}^{k+1}\langle
A_{JX'_{k+2}}X,X'_i \rangle X'_i.
\end{equation*}
Then we have $\mathcal{A}(X'_{k+2})=\lambda_{k+2,k+2}X'_{k+2}$. As
before we will choose $X'_{k+3}, \ldots,X'_{n_1}$ as the remaining
unit eigenvectors of $\mathcal{A}$, with corresponding eigenvalues
$\lambda_{k+2,k+3}$, $\ldots$, $\lambda_{k+2,n_1}$, respectively.
In this way, we can prove that
\begin{equation}\label{eqn:3.64}
\left\{
\begin{aligned}
&A_{JX'_{k+2}}X'_{k+2}=\mu_1 X'_1+\cdots+\mu_{k}X'_{k}+\lambda_{k+1,k+2}X'_{k+1}+\lambda_{k+2,k+2}X'_{k+2},\\
& A_{JX'_{k+2}}X'_i=\lambda_{k+2,i}X'_i,\ \ k+3\leq i\leq n_1.
\end{aligned}
\right.
\end{equation}

Taking $X=Z=X'_{k+2}$ and $Y=X'_i$ for $k+3\le i\leq n_1$ in \eqref{eqn:2.4}
and using \eqref{eqn:2.12}, we  obtain
\begin{equation}\label{eqn:3.65}
\lambda_{k+2,i}^2-\lambda_{k+2,k+2}\lambda_{k+2,i}
-\tilde{c}-(\mu_1^2+\cdots+\mu_{k}^2+\lambda_{k+1,i}^2)=0,\ \ k+3\leq i\leq n_1.
\end{equation}
Noting that for $k+3\leq i\leq n_1$ we have $\lambda_{k+2,k+2}\ge
2\lambda_{k+2,i}$, it follows from \eqref{eqn:3.65} that
\begin{equation}\label{eqn:3.66}
\begin{aligned}
\lambda_{k+2,i}=\frac{1}{2}\Big(\lambda_{k+2,k+2}-&\sqrt{\lambda_{k+2,k+2}^2
+4(\tilde{c}+\mu_1^2+\cdots+\mu_{k}^2+\lambda_{k+1,i}^2)}\,\Big),\\
&\hspace{5cm}i\ge k+3.
\end{aligned}
\end{equation}

Similarly, let $X=Z=X'_{k+2}$ and $Y\in T_qM_2^{n_2}$ be a unit
vector in \eqref{eqn:2.4}. Using \eqref{eqn:2.12} we get
\begin{equation}\label{eqn:3.67}
\mu_{k+2}^2-{\mu_{k+2}}\lambda_{k+2,k+2}-\tilde{c}
-(\mu_1^2+\cdots+\mu_{k}^2+\lambda_{k+1,i}\mu_{k+1})=0,\ i\ge k+2.
\end{equation}
Combining \eqref{eqn:3.61} and \eqref{eqn:3.67} we obtain
\begin{equation}\label{eqn:3.68}
{\mu^2_{k+2}}-{\mu_{k+2}}\lambda_{k+2,k+2}=0,
\end{equation}
and therefore it holds that
\begin{equation}\label{eqn:3.69}
\mu_{k+2}=\frac{1}{2}(\lambda_{k+2,k+2}+\varepsilon_{k+2}\lambda_{k+2,k+2}),\ \varepsilon_{k+2}=\pm1.
\end{equation}
Then, using ${\it trace}\,A_{JX'_{k+2}}=0$, we can get $n_1-k-2>0$ and
\begin{equation}\label{eqn:3.70}
\lambda_{k+2,k+2}=2\sqrt{\tfrac{\tilde{c}+\mu_1^2+\cdots
+\mu_{k}^2+\lambda_{k+1,i}^2}{\big(\tfrac{n_1+n_2-k+\varepsilon_{k+2}n_2}{n_1-k-2}\big)^2-1}}\,,\
\ i\ge k+2.
\end{equation}
Given \eqref{eqn:3.62}, we have the following calculations
\begin{equation}\label{eqn:3.71}
\begin{aligned}
\tfrac{n_1+n_2-k+1}{n_1-n_2-k-1}-\tfrac{n_1+n_2+\varepsilon_{k+2}n_2-k}{n_1-k-2}&>\tfrac{n_1+n_2-k+1}{n_1-n_2-k-1}-\tfrac{n_1+2n_2-k}{n_1-k-2}\\
&=\tfrac{2(n_2+1)(n_2-1)}{(n_1-n_2-k-1)(n_1-k-2)}\,.
\end{aligned}
\end{equation}
Then, by \eqref{eqn:3.63} and \eqref{eqn:3.70}, we get
$\lambda_{k+2,k+2}>\lambda_{k+1,k+1}$, which is a contradiction.
Therefore, $\varepsilon_{k+1}=-1$ and
$\lambda_{k+1,k+2}=\cdots=\lambda_{k+1,n_1}=\mu_{k+1}$, as claimed.

Finally, from ${\it trace}\ A_{J\bar{X}_{k+1}}=0$, we get
$$
\lambda_{k+1,k+1}+(n-k-1)\mu_{k+1}=0.
$$
This completes the verification of {\bf Claim II-(2)}.
\end{proof}

%%%%%%%%%%%%%%%%%%%%%%%%%%%%%%%%%%%%%%%%%%%%%%%%%%%%%%%%%%%%

\vskip 2mm

\noindent\textbf{Claim II-(3)}. {\it Let $\{X_i\}_{1\le i\le n_1}$
be the local orthonormal vector fields of $M^n$ which form a basis for
the first component as in the assumption of induction. If a unit vector
field $V$ of $TM_1^{n_1}\setminus {\rm span}\{X_1,\ldots,X_k\}$ has
the property that $A_{JV}V=\lambda V+\mu_1X_1+\cdots+\mu_kX_k$, then
the function $\lambda$ takes values of only finite possibilities.}
\begin{proof}[Proof of Claim II-(3)]

We first carry the discussion at an arbitrary fixed point $(p,q)$ Let
$X'_{k+1}:=V$, $X'_1=X_1,\ldots,X'_k=X_k$, $\lambda_{k+1,k+1}:=\lambda$.

Put $V_k=\{u\in T_{p}M_1^{n_1} \mid u\perp X_1,\ldots,u\perp
X_k\}$. Define $\mathcal{A}: V_k\to V_k$ by
$$
\mathcal{A}(X)=A_{JV}X-\sum_{i=1}^k\langle A_{JV}X,
X_i\rangle X_i.
$$
It is easily seen that $\mathcal{A}$ is a self-adjoint transformation and that
$\mathcal{A}(V)=\lambda V$. Thus, we can choose an orthonormal basis
$\{X'_i\}_{k+1\le i\le n_1}$ of $V_k$, such that $\mathcal{A}(X'_i)=\lambda_{i,i}X'_i$
for $k+2\le i\le n_1$. Then, as before we see that \eqref{eqn:3.56} holds, and
thus there exists an integer $n_{1,k+1}$, $0\leq n_{1,k+1}\leq n_1-(k+1)$
such that, if necessary after renumbering the basis, we have
\begin{equation}\label{eqn:3.72}
\left\{
\begin{aligned}
&\lambda_{k+1,k+2}=\cdots=\lambda_{k+1,n_{1,k+1}+k+1}\\
&\ =\frac{1}{2}\Big(\lambda_{k+1,k+1}+\sqrt{\lambda_{k+1,k+1}^2
        +4(\tilde{c}+\mu_1^2+\cdots+\mu_{k-1}^2+\mu_k^2)}\Big),\\
&\lambda_{k+1, n_{1,k+1}+k+2}=\cdots=\lambda_{k+1,n_1}\\
&\ =\frac{1}{2}\Big(\lambda_{k+1,k+1}-\sqrt{\lambda_{k+1,k+1}^2
        +4(\tilde{c}+\mu_1^2+\cdots+\mu_{k-1}^2+\mu_k^2)}\Big).
\end{aligned}\right.
\end{equation}
Then, using ${\it trace}\,A_{JX'_{k+1}}=0$, we can show that
\begin{equation}\label{eqn:3.73}
\begin{aligned}
&\lambda_{k+1,k+1}=2\sqrt{\tfrac{\tilde{c}+\mu_1^2+\cdots
+\mu_{k-1}^2+\mu_k^2}{\big(\tfrac{n_1+n_2-k+1}{2n_{1,k+1}-n_1+n_2\varepsilon_{k+1}+k+1}\big)^2-1}}.
\end{aligned}
\end{equation}

Finally, noticing that by assumption $\mu_1,\ldots,\mu_k$ are
constants, and that the set
$$
\big\{n_{1,k+1}(p)\,|\,p\in M_1^{n_1}\big\}
$$
consists of finite numbers, we get the assertion that $\lambda=\lambda_{k+1,k+1}$
takes values of only finite possibilities.
\end{proof}

\noindent\textbf{Claim II-(4)}. {\it Let $\{X_i\}_{1\le i\le n_1}$
be the local vector fields on $M^n$ as in the assumption
of induction, $V_k=\{u\in T_pM_1^{n_1} \mid \langle u, u\rangle=1,
u\perp X_1, \dots, u\perp X_k\}$. The unit vector $\bar{X}_{k+1}\in
T_pM_1^{n_1}$ determined in Claim II-(1) can be extended differentiably
to be a unit vector field, denoted by $\tilde{X}_{k+1}$, in a neighbourhood
$U$ of $(p,q)$, such that for each $(p', q')\in U$, $f_{(p', q')}$ defined
on $V_k$ attains the absolute maximum at $\tilde{X}_{k+1}{(p', q')}$.}

\begin{proof}[Proof of Claim II-(4)]

Let $\{E_{k+1},\ldots,E_{n_1}\}$ be
arbitrary differentiable orthonormal vector fields of $V_k$ defined on a
neighbourhood $U'$ of $(p,q)$ such that $E_{k+1}(p,q)=\bar{X}_{k+1}$.
Then, we define a function $\gamma$ by
$$
\begin{aligned}
\gamma:\ &\mathbb{R}^{n_1-k}\times U'\rightarrow
\mathbb{R}^{n_1-k},\\
&(a_{k+1}, \ldots,a_{n_1},(p',q'))\mapsto(b_{k+1},\ldots,b_{n_1}),
\end{aligned}
$$
where $b_l=\sum_{i,j=k+1}^{n_1}a_ia_j\langle
A_{JE_i}E_j,E_l\rangle-\lambda_{k+1,k+1}a_l,\ l=k+1\le l\le n_1$.
Using the fact that $f_{(p,q)}$ attains an absolute maximum in
$E_{k+1}{(p,q)}$ so that
$$
\langle A_{JE_{k+1}}E_{l},E_{l}\rangle|_{(p,q)}=\lambda_{k+1,l},\ \ l\ge
k+1,
$$
we then obtain that
\begin{equation*}
\begin{aligned}
\tfrac{\partial b_l}{\partial a_m}(1,0,\ldots,0,(p,q))&=2 \langle
A_{JE_{k+1}{(p,q)}}
E_m{(p,q)},E_l{(p,q)}-\lambda_{k+1,k+1}\delta_{lm}\\
&=\left\{
\begin{aligned}
&0,\ \ {\rm if}\  l\neq m ,\\
&\lambda_{k+1,k+1},\ \ {\rm if}\ l= m=k+1 ,\\
&2\lambda_{k+1,l}-\lambda_{k+1,k+1},\ \ {\rm if}\ l= m\ge k+2.
\end{aligned}
\right.
\end{aligned}
\end{equation*}
As $\tilde{c}>0$, then from \eqref{eqn:3.57} we obtain that
$2\lambda_{k+1,l}-\lambda_{k+1,k+1}\neq0$.
Hence, similar to the proof of Claim I-(4), the implicit function
theorem shows that there exist differentiable functions
$a_{k+1},\ldots, a_{n_1}$, defined on a neighbourhood $U$ of $(p,q)$,
such that the local vector field $V$, defined by
$$
V=a_{k+1}E_{k+1}+\cdots+a_{n_1}E_{n_1},
$$
has the property $V(p,q)=X_{k+1}$ and satisfies
that
$$
A_{JV}V=\lambda_{k+1, k+1}V+\mu_1\langle V,V\rangle X_1+\cdots+\mu_k
\langle V,V\rangle X_k.
$$
Hence
\begin{equation}\label{eqn:3.74}
A_{J\tfrac{V}{\sqrt{\langle V,V \rangle }}}
\tfrac{V}{\sqrt{\langle V,V \rangle }}
=\tfrac{\lambda_{k+1,k+1}}{\sqrt{\langle V,V \rangle}}
\tfrac{V}{\sqrt{\langle V,V \rangle }}+\mu_1X_1+\cdots+\mu_kX_k.
\end{equation}

According to Claim II-(3), the function
$\tfrac{\lambda_{k+1,k+1}}{\sqrt{\langle V,V \rangle }}$ can
take a finite number of values. On the other hand,
$\tfrac{\lambda_{k+1,k+1}}{\sqrt{\langle V, V \rangle}}$ is
continuous and $\langle V,V \rangle {(p,q)}=1$. Thus $\langle V,V
\rangle =1$ holds identically. Let $\tilde{X}_{k+1}:=V$. Then,
\eqref{eqn:3.74} and $\langle V,V \rangle=1$ imply that for any
$(p',q')\in U$, $f_{(p',q')}$ defined on $V_k(p',q')$ attains an absolute maximum
at $\tilde{X}_{k+1}(p',q')$.\end{proof}

Finally, we choose vector fields
$\tilde{X}_1=X_1,\ldots,\tilde{X}_k=X_k$ and
$\tilde{X}_{k+2},\ldots,\tilde{X}_{n_1}$ such that
$\{\tilde{X}_1,\tilde{X}_2,\ldots,\tilde{X}_{n_1}\}$ are
orthonormal vector fields of $M^n$ which together span a basis for
the first component of the tangent space. Then, combining with Lemma
\ref{lem:3.2}, we immediately fulfil the second step of induction.

Accordingly, we have completed the proof of Lemma \ref{lem:3.5}.
\end{proof}

In the following part, we aim at giving the explicit parametrization
of $\psi:\,M^n\to \tilde M^n(4\tilde{c})$. For this we will use
Theorems \ref{thm:2.1} and \ref{thm:2.2} from \cite{LW2}.

Firstly, we will prove that the submanifold $M^n$ has parallel
second fundamental form. We will do this by direct computations: for
the local orthonormal frame $\{X_i\}_{1\le i\le n_1}$ of $M_1^{n_1}$
as determined in Lemma \ref{lem:3.5}, we will use the Codazzi
equation in \eqref{eqn:2.5} to show that, for each $1\le i\le n_1$,
$X_i$ is a parallel vector field. Then we will further prove that
$\psi:\,M^n\to \tilde M^n(4\tilde{c})$ has parallel second fundamental form.

\begin{lem}\label{lem:3.6}
Let $\{X_1,\ldots,X_{n_1}\}$ be the local orthonormal vector fields of
$M^n$, as determined in Lemma \ref{lem:3.5} and let $\{Y_1,\ldots,Y_{n_2}\}$
be a local vector fields on $M$ which form a basis for the second component,
and moreover, we assume that $\{Y_i\}$ depend only on the
second component. Then
\begin{equation*}
\nabla X_i=0,\ \ 1\le i\le n_1.
\end{equation*}
\end{lem}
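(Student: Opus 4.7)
The plan is to apply the Codazzi equation \eqref{eqn:2.5} to several carefully chosen triples of tangent vectors and to exploit the rigid algebraic structure of $A_J$ established in Lemma \ref{lem:3.5}, in which every $\mu_i$ and $\lambda_{i,i}$ is a constant. Using $h(A,B)=JA_{JA}B=JA_{JB}A$ together with $\nabla^\perp_UJV=J\nabla_UV$, Codazzi may be rewritten on the tangent bundle as
\[
\nabla_U(A_{JW}V)-A_{JW}\nabla_UV-A_{J\nabla_UW}V=\nabla_V(A_{JW}U)-A_{JW}\nabla_VU-A_{J\nabla_VW}U.
\]
Because $M^n=M_1^{n_1}\times M_2^{n_2}$ is a Riemannian product and the $Y_j$ are lifted from the second factor, the Levi-Civita connection respects the splitting: $\nabla_{X_i}Y_a=0$, $\nabla_{Y_a}Y_b\in TM_2$, while a priori $\nabla_{Y_a}X_i$ and $\nabla_{X_j}X_i$ lie in $TM_1$. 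Hence the task reduces to proving that every scalar coefficient $\langle\nabla_{Y_a}X_i,X_l\rangle$ and $\langle\nabla_{X_j}X_i,X_l\rangle$ vanishes.

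First I would establish $\nabla_{Y_a}X_i=0$. Specializing the restated Codazzi to $(U,V,W)=(Y_a,Y_b,X_i)$ with $a\ne b$, the terms driven by $A_{JX_i}|_{TM_2}=\mu_i\,\mathrm{id}$ collapse, and matching $JY_a$-versus-$JY_b$ components extracts the single scalar relation $\sum_k\mu_k\langle\nabla_{Y_a}X_i,X_k\rangle=0$. A second specialization $(U,V,W)=(X_j,Y_a,X_i)$ for varying $j$, after expanding $A_J$ via the staircase form \eqref{eqn:3.28}, couples the $\langle\nabla_{Y_a}X_i,X_k\rangle$ to the $\langle\nabla_{Y_a}X_j,X_k\rangle$ with coefficients of the form $\mu_j-\mu_k$. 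The crucial algebraic input is the pairwise distinctness of $\mu_1,\ldots,\mu_{n_1}$, which follows from the explicit formulas \eqref{eqn:3.33}--\eqref{eqn:3.36} and \eqref{eqn:3.59}--\eqref{eqn:3.60} combined with the sign choice $\varepsilon_i=-1$ fixed throughout by Claims I-(2) and II-(2); once invoked, the coupled linear system forces every $\langle\nabla_{Y_a}X_i,X_k\rangle$ to vanish.

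Second, I would establish $\nabla_{X_j}X_i=0$ by a parallel argument. Set $\Gamma^l_{ji}:=\langle\nabla_{X_j}X_i,X_l\rangle$ and recall the metric skew-symmetry $\Gamma^l_{ji}+\Gamma^i_{jl}=0$ from differentiating $\langle X_i,X_l\rangle=\delta_{il}$. Applying Codazzi with $(U,V,W)=(X_j,X_k,Y_a)$ produces the symmetrized identity $\sum_l\mu_l\Gamma^l_{jk}=\sum_l\mu_l\Gamma^l_{kj}$, while $(U,V,W)=(X_j,X_k,X_i)$ yields a richer linear system in the $\Gamma^l_{ji}$ whose matrix, by the constancy of the $\mu_i$ and $\lambda_{i,i}$, again involves only differences $\mu_p-\mu_q$. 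Combined with the skew-symmetry and the distinctness of the $\mu_i$, this system forces every $\Gamma^l_{ji}=0$, completing the lemma.

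The main obstacle will be the heavy bookkeeping inside the Codazzi expansions: the formula $A_{JX_i}X_j=\mu_iX_j$ for $i<j$ is asymmetric in $i,j$ and interacts nontrivially with the subdiagonal entries of $A_{JX_i}X_i$, so every specialization splits into several subcases depending on the relative order of the indices $i,j,k,l$ before the linear algebra closes. A secondary but indispensable point is the verification of $\mu_i\ne\mu_k$ for $i\ne k$, which must be traced carefully through the inductive formulas of Lemma \ref{lem:3.5}.
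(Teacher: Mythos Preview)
Your outline and the paper's proof both rest on the Codazzi equation and the constancy of the $\mu_i,\lambda_{i,i}$, but the organization is genuinely different. The paper does \emph{not} set up one global linear system in the connection coefficients; it proceeds by \emph{induction on $i$} and uses only the Codazzi triples with a repeated index, namely $(\nabla h)(X_k,X_j,X_j)=(\nabla h)(X_j,X_k,X_j)$. The point of the induction is that once $\nabla_X X_m=0$ for $m<j$ is known, $\nabla_{X_k}X_j$ already lies in $\mathrm{span}\{X_j,\ldots,X_{n_1}\}$, on which $A_{JX_j}$ acts simply by $\lambda_{j,j}$ and $\mu_j$. The Codazzi identity then collapses to a single scalar factor, and the only inequality ever used is $\lambda_{j,j}\neq 2\mu_j$, which is immediate from \eqref{eqn:3.29} and $\mu_j\neq 0$. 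No pairwise distinctness of the $\mu_i$ is needed. The paper then repeats the same induction with $Y$ in place of the outer $X$ to obtain $\nabla_Y X_i=0$.

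Your proposal, by contrast, assembles all $\Gamma^l_{ji}$ at once and appeals to the distinctness of the $\mu_i$ to close a coupled linear system. Two cautions. First, the specialization $(U,V,W)=(X_j,Y_a,X_i)$ does not behave as you describe: its left-hand side lies entirely in $J(TM_2)$ and equals $-(\sum_l\mu_l\Gamma^l_{ji})JY_a$, while the right-hand side lies in $J(TM_1)$; so this equation in fact gives a constraint on the $X$-derivatives $\Gamma^l_{ji}$ from the $JY_a$-component, and separately constraints on the $Y$-derivatives from the $J(TM_1)$-components. Your ``first $\nabla_Y X_i$, then $\nabla_X X_i$'' ordering therefore has the bookkeeping reversed. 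Second, and more seriously, the closure step is only asserted: for fixed $j$ the relations $\sum_l\mu_l\Gamma^l_{ji}=0$ together with skew-symmetry say that the vector $(\mu_1,\ldots,\mu_{n_1})$ lies in the kernel of the skew matrix $(\Gamma^l_{ji})_{l,i}$, which by itself does not force that matrix to vanish; you must bring in the full $(X_j,X_k,X_i)$ equations, and the resulting system splits into many index-order subcases whose nonsingularity you have not checked. This is exactly the ``heavy bookkeeping'' you anticipate, and it is precisely what the paper's inductive choice of triples with a repeated argument avoids.
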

\begin{proof}
We will proceed by induction on the subscript of $X_i$ and prove separately
that $ \nabla_X X_i=0,\ X\in TM_1^{n_1}$ and $\nabla_Y X_i=0,\ Y\in TM_2^{n_2}$,
where $ 1\le i\le n_1.$

Let us check first that $\nabla_X X_i=0,\ X\in TM_1^{n_1}$.

For $i\ge2$, by using \eqref{eqn:2.3} and \eqref{eqn:3.28},
we have
$$
\left\{
\begin{aligned}
&J(\nabla h)(X_i,X_1,X_1)=(2\mu_1-\lambda_{1,1})\nabla_{X_i} X_1,\\
&J(\nabla h) (X_1,X_i,X_1)=-\mu_1\nabla_{X_1}
X_i+A_{JX_1}(\nabla_{X_1}X_i)+A_{JX_i}(\nabla_{X_1}X_1).
\end{aligned}
\right.
$$
Then, the Codazzi equations $J(\nabla h)(X_i,X_1,X_1)=J(\nabla h)
(X_1,X_i,X_1)$ give that
\begin{equation}\label{eqn:3.75}
(2\mu_1-\lambda_{1,1})\nabla_{X_i} X_1=-\mu_1\nabla_{X_1}
X_i+A_{JX_1}(\nabla_{X_1}X_i)+A_{JX_i}(\nabla_{X_1}X_1).
\end{equation}

Taking the component in the direction of $X_1$ in \eqref{eqn:3.75} we can
get $\nabla_{X_1} X_1=0$.
Substituting $\nabla_{X_1} X_1=0$ into \eqref{eqn:3.75}, and then taking
the component in the direction of $X_i$, we get $\langle
\nabla_{X_i} X_1,X_k\rangle=0$ for $2\leq i,k\leq n_1$.

The above facts immediately verify for the first step of induction that
$$
\nabla_X X_1=0,\ X\in TM_1^{n_1}.
$$

\vskip 2mm

Next, assume by induction that for a fixed $j\ge2$ it holds
\begin{equation}\label{eqn:3.76}
\nabla_X X_k=0,\ X\in TM_1^{n_1},\ k=1,\ldots, j-1.
\end{equation}

We claim that $\nabla X_j=0$. The proof of the claim will be given
in four cases:

(1) From the induction assumption and the fact that $\langle X_i,X_l
\rangle=\delta_{il}$, we get
$$
\langle \nabla_{X_i}X_j,X_k\rangle=-\langle
\nabla_{X_i}X_k,X_j\rangle=0,\ 1\leq i\leq n_1,\ k\leq j.
$$

\vskip 2mm

(2) For $i\le j-1$, by the induction assumption we have
$$
\begin{aligned}
J(\nabla h) (X_i,X_j,X_j)& =-\nabla_{X_i}A_{JX_j}X_j+2A_{JX_j}\nabla_{X_i}X_j\\
&=\lambda_{j,j}\nabla_{X_i}X_j-2A_{JX_j}\nabla_{X_i}X_j\\
&=(\lambda_{j,j}-2\mu_j)\nabla_{X_i}X_j;\\
J(\nabla h) (X_j,X_i,X_j)& =-\nabla_{X_j}A_{JX_i}
X_j+A_{JX_j}\nabla_{X_j}X_i+A_{JX_i}\nabla_{X_j}X_j\\
&=-\mu_i\nabla_{X_j}X_j+A_{JX_j}\nabla_{X_j}X_i+A_{JX_i}\nabla_{X_j}X_j\\
&=-\mu_i \nabla_{X_j}X_j +A_{JX_i}\nabla_{X_j}X_j.
\end{aligned}
$$

Then, by $J(\nabla h) (X_i,X_j,X_j)=J(\nabla h) (X_j,X_i,X_j)$, we
immediately get
\begin{equation*}
\langle \nabla_{X_i}X_j, X_{j_0}\rangle=0, \ \ i\le j-1,\ j+1\leq
j_0\leq n_1.
\end{equation*}

\vskip 2mm

(3) For $j+1\leq j_0\leq n_1$, similar and direct calculations give
that
$$
\begin{aligned}
J(\nabla h) (X_{j_0},X_j,X_j)& =-\nabla_{X_{j_0}}A_{JX_j}X_j+2A_{JX_j}\nabla_{X_{j_0}}X_j\\
&=\lambda_{j,j}\nabla_{X_{j_0}}X_j-2A_{JX_j}\nabla_{X_{j_0}}X_j\\
&=(\lambda_{j,j}-2\mu_j)\nabla_{X_{j_0}}X_j;\\
J(\nabla h) (X_j,X_{j_0},X_j)& =-\nabla_{X_j}A_{JX_{j_0}}X_j+A_{JX_j}\nabla_{X_j}X_{j_0}+A_{JX_{j_0}}\nabla_{X_j}X_j\\
&=-\mu_j \nabla_{X_j}X_{j_0}+A_{JX_j}\nabla_{X_j}X_{j_0}+A_{JX_{j_0}}\nabla_{X_j}X_j.
\end{aligned}
$$

By $J(\nabla h) (X_j,X_{j_0},X_j)=J(\nabla h) (X_j,X_{j_0},X_j)$ and
taking the component in the direction of $X_{j}$, we obtain that
\begin{equation*}
\langle \nabla_{X_j}X_j, X_{j_0}\rangle=0,\ \ j+1\leq j_0\leq n_1.
\end{equation*}

\vskip 2mm

(4) For $i\ge j+1$, by similar calculations for both sides of
$$
J(\nabla h)(X_i,X_{j},X_j)=J(\nabla h)(X_j, X_{i},X_j),
$$
and taking the component in the direction of $X_{j_0}$ for $j_0\ge
j+1$, we can get
$$
\langle \nabla_{X_{i}} X_j,X_{j_0} \rangle=0,\ \ i\ge j+1,\ j_0\ge
j+1.
$$
Summing up the above four cases, we finally get the assertion
$$
\nabla_X X_j=0,\ X\in TM_1^{n_1}.
$$
Finally, we must prove that $\nabla_Y X_i=0$ for $Y\in TM_2^{n_2}$ and
$1\le i\le n_1$. The proof follows the same steps as before. For instance,
we start with the Codazzi equation $J(\nabla h)(X_i, Y_1,X_1)=J(\nabla h)(Y_1,X_i,X_1)$,
$i>1$. Multiplying once by $X_1$ and once by $Y_j$, $j\le n_1$, we get
that $\nabla_{Y_1}X_1=0$. Then, $\nabla_{Y_j}X_1=0$, $j>1$ follows similarly
from  $J(\nabla h)(Y_i, X_1,X_1)=J(\nabla h)(X_1,Y_i,X_1)$, $i>1$. We then
complete the proof of this part by following the same steps as for $\nabla_X X_i=0,\ X\in TM_1^{n_1}$.

By induction we have completed the proof of Lemma \ref{lem:3.6}.
\end{proof}

\begin{lem}\label{lem:3.7}
Under the condition of Theorem \ref{thm:3.1}, the submanifold
$\psi: M^n\to \tilde M^n(4\tilde{c})$ has parallel second
fundamental form: $\nabla h=0$.
\end{lem}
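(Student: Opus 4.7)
The plan is to verify $\nabla h=0$ by direct computation in the adapted frame constructed in Lemmas \ref{lem:3.5} and \ref{lem:3.6}. Writing $h(U,V)=JA_{JU}V$ (which follows from \eqref{eqn:2.3}) and invoking \eqref{eqn:3.28} together with Lemma \ref{lem:3.4}, one sees that $h(X_i,X_j)$ is a constant-coefficient combination of $JX_1,\ldots,JX_{n_1}$; that $h(X_i,Y_k)=\mu_i\,JY_k$; and that $h(Y_j,Y_k)=\delta_{jk}\sum_{l=1}^{n_1}\mu_l\,JX_l$. The crucial point is that every coefficient appearing here is a \emph{constant}, so the only possible obstruction to the vanishing of $\nabla h$ comes from covariant derivatives of the frame vector fields themselves.

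The structural inputs I would use are as follows. By Lemma \ref{lem:3.6}, $\nabla_W X_l=0$ for all tangent $W$, and hence $\nabla^\perp_W(JX_l)=J\nabla_W X_l=0$ in the normal bundle. For the $Y_k$, which are chosen to depend only on the second factor, the product structure of the metric forces $\nabla_X Y_k=0$ whenever $X\in TM_1^{n_1}$, so $\nabla_W Y_k$ always lies in $TM_2^{n_2}$.

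With these preparations, I would evaluate $(\nabla h)(W,U,V)$, defined by \eqref{eqn:2.7}, in the three cases distinguished by whether $U,V$ lie in $TM_1^{n_1}$ or $TM_2^{n_2}$. \textbf{Case (a)}: For $(\nabla h)(W,X_i,X_j)$, the first term vanishes because $h(X_i,X_j)$ is a constant-coefficient sum of the parallel normal vectors $JX_l$, and the other two terms vanish immediately by Lemma \ref{lem:3.6}. \textbf{Case (b)}: For $(\nabla h)(W,X_i,Y_k)$, one computes $\nabla^\perp_W(\mu_i JY_k)=\mu_i J\nabla_W Y_k$; on the other hand $h(X_i,\nabla_W Y_k)=\mu_i J\nabla_W Y_k$ because $h(X_i,\cdot)|_{TM_2^{n_2}}$ acts as $\mu_i J$ and $\nabla_W Y_k\in TM_2^{n_2}$, while $h(\nabla_W X_i,Y_k)=0$ by Lemma \ref{lem:3.6}; so the three contributions cancel. \textbf{Case (c)}: For $(\nabla h)(W,Y_j,Y_k)$, the first term vanishes for the same reason as in Case (a), and the remaining two combine into
\[
-\bigl(\langle\nabla_W Y_j,Y_k\rangle+\langle Y_j,\nabla_W Y_k\rangle\bigr)\sum_{l=1}^{n_1}\mu_l\,JX_l=0,
\]
by metric compatibility applied to $\langle Y_j,Y_k\rangle=\delta_{jk}$.

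The potential obstacle is that the $\{Y_k\}$ are not globally parallel (since $M_2^{n_2}$ is curved when $c_2\neq 0$), so one cannot trivialize $(\nabla h)$ by invoking the parallelism of a single frame on all of $M^n$. This turns out to be mild: the product structure of the metric handles all cross-factor covariant derivatives, and the only intra-factor terms that appear in Case (c) cancel in pairs by antisymmetry. Combining the three cases, $\nabla h\equiv 0$ on $M^n$, which is the desired conclusion.
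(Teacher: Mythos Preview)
Your proof is correct and follows essentially the same route as the paper: both reduce $(\nabla h)$ to a direct computation in the frame of Lemma~\ref{lem:3.5}, using that the $X_i$ are parallel (Lemma~\ref{lem:3.6}), that $\nabla_X Y_k=0$ for $X\in TM_1^{n_1}$ by the product structure, and that every coefficient in $h$ relative to this frame is constant. Your case-by-case organization is in fact more explicit than the paper's, which simply lists the relevant terms and asserts the result ``by direct calculations.''
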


\begin{proof}
We have that $M^n=M_1^{n_1}(c_1)\times M_2^{n_2}(c_2)$, for $c_1=0$, $c_2>0$ and
$\tilde{c}=1$. Let $\{X_i\}_{1\le i\le n_1}$ and $\{Y_j\}_{1\le j\le n_2}$ be the
local orthonormal frames of vector fields of $M_1^{n_1}$ and $M_2^{n_2}$,
respectively, as described in Lemma \ref{lem:3.5}.
Consider arbitrarily $X\in TM_1^{n_1}$ and $Y\in TM_2^{n_2}$. We will make
use of the Codazzi equation \eqref{eqn:2.5}, equations \eqref{eqn:3.17}, \eqref{eqn:3.28}
and the fact that $\nabla X_i=0,1\leq i\leq n_1$. We need, additionally, to know
that $\nabla_{X_i}Y_j=0$ for $i<n_1$ and each $j$, where for every $Y_j$ chosen in
the basis of $T_qM_2^{n_2}$, we take its horizontal lift on $T_{(p,q)}(M_1^{n_1}\times M_2^{n_2})$,
which we denote still by $Y_j$. Our setting corresponds now to [20, Proposition 56, p. 89].
Hence, $\nabla _{X}Y_j=0$.

Given the symmetries of $\nabla h$, it is enough to evaluate the following terms such as
$\nabla h(X_k,Y_i,Y_j)$, $\nabla h(Y_i,X_k,Y_j)$, $\nabla h(X,X_i,X_j)$, $\nabla h(Y,Y_i,Y_j)$
and $\nabla h(Y,X_i,X_j)$. Then finally by direct calculations we obtain $\nabla h=0$.
\end{proof}

\vskip 3mm

\begin{proof}[\textbf{Completion of the Proof of Theorem
\ref{thm:3.1}}]~

Let $\{X_i\}_{1\le i\le n_1}$ and $\{Y_j\}_{1\le j\le n_2}$ be the
local orthonormal frames of vector fields of $M_1^{n_1}$ and
$M_2^{n_2}$, respectively, as described in Lemma \ref{lem:3.5}.
Now, we consider the two distributions $\mathcal{D}_1$ spanned by
$X_1$, and $\mathcal{D}_2$ spanned by $\{X_2,\ldots,X_{n_1},
Y_1,\ldots,Y_{n_2}\}$. Given the form of $A_{JX_1}$ in
\eqref{eqn:3.28}, we may apply Theorem \ref{thm:2.1} and
obtain that $\psi: M^n\to \mathbb{CP}^n(4)$ is locally a Calabi
product Lagrangian immersion of an $(n-1)$-dimensional Lagrangian
immersion $\psi_1:M_{1,1}^{n-1}\to \mathbb{CP}^{n-1}(4)$ and a
point, i.e., $M^n=I_1\times M_{1,1}^{n-1}$, $I_1\subset \mathbb{R}$.
As $\psi$ is minimal in our case, we may further apply Theorem
\ref{thm:2.1} (2). Therefore, we get that
\begin{equation*}
\mu_1=\pm\frac{1}{\sqrt{n}}\, \text{ and }\psi_1 \text{ is minimal},
\end{equation*}
and $\psi=\Pi\circ\tilde{\psi}$ for
\begin{equation*}
\tilde{\psi}(t,p)=\Big(\sqrt{\tfrac{n}{n+1}}e^{i\frac{1}{n+1}t}
\tilde{\psi_1}(p), \sqrt{\tfrac{1}{n+1}}e^{-i\frac{n}{n+1}t}\Big), \
(t,p)\in I_1\times M_{1,1}^{n-1},
\end{equation*}
where $\Pi: \mathbb{S}^{2n+1}(1)\to\mathbb{CP}^{n}(4)$ is the Hopf
fibration and $\tilde{\psi}_1:M_{1,1}^{n-1}\to \mathbb{S}^{2n-1}(1)$
is the horizontal lift of $\psi_1$.

Consider next the immersion $\psi_1:M_{1,1}^{n-1}\to\mathbb{CP}^{n-1}(4)$.
From \eqref{eqn:3.28} we may see that the restriction  $A^1_J$ of
the shape operator $A_J$ on $\{X_2,\ldots,X_{n_1}, Y_1,\ldots,Y_{n_2}\}$
(which spans $TM_{1,1}^{n-1}$) is defined as
\begin{equation}\label{eqn:3.77}
\left\{
\begin{aligned}
A^1_{JX_2}X_2&=\lambda_{2,2}X_2,\\
A^1_{JX_i}X_i&=\mu_2 X_2+\cdots+\mu_{i-1} X_{i-1}+\lambda_{i,i}X_i,\ \ 3\le i\leq n_1,\\
A^1_{JX_i}X_j&=\mu_i X_j, \ \ 2\leq i\le j-1,\\
A^1_{JX_i}Y_j&=\mu_i Y_j,\ \ 2\leq i\leq n_1,\ \ 1\leq j\leq n_2,\\
A^1_{JY_i}Y_j&=\delta_{ij}(\mu_2 X_2+\cdots+\mu_{n_1} X_{n_1}).
\end{aligned}\right.
\end{equation}

We then apply Theorem \ref{thm:2.1} on $M_{1,1}^{n-1}$, by
identifying $\mathcal{D}_1$ with $\mathrm{span}\{X_2\}$ and $\mathcal{D}_2$
with $\mathrm{span}\{X_3,\ldots,X_{n_1}, Y_1, \ldots,Y_{n_2}\}$, and obtain
that $\psi_1:M_{1,1}^{n-1}\to \mathbb{CP}^{n-1}(4)$ is locally a
Calabi product Lagrangian immersion of an $(n-2)$-dimensional
Lagrangian immersion $\psi_2 :M_{1,2}^{n-2}\rightarrow
\mathbb{CP}^{n-2}(4)$ and a point, thus $M_{1,1}^{n-1}=I_2\times
M_{1,2}^{n-2}$ and $M^n=I_1\times I_2\times M_{1,2}^{n-2}$,
$I_2\subset \mathbb{R}$.

As $\psi_2$ is minimal, we further apply Theorem
\ref{thm:2.1} (2), and we get
\begin{equation*}
\mu_2=\pm \frac{1}{\sqrt{n-1}},\ \ \psi_2 \text{ is minimal},
\end{equation*}
and $\psi_1=\Pi_1\circ\tilde{\psi_1}$ for
\begin{equation*}
\tilde{\psi}_1(t,p)=\Big(\sqrt{\tfrac{n-1}{n}}
e^{i\tfrac{1}{n}t}\tilde{\psi}_2(p),
\sqrt{\tfrac{1}{n}}e^{-i\tfrac{n-1}{n}t}\Big),\ \ (t,p)\in I_2\times
M_{1,2}^{n-2},
\end{equation*}
where $\Pi_1: \mathbb{S}^{2n-1}(1)\to\mathbb{CP}^{n-1}(4)$ is the
Hopf fibration, and $\tilde{\psi}_2:M_{1,2}^{n-2}\to
\mathbb{S}^{2n-3}(1)$ is the horizontal lift of $\psi_2$.

In this way, we can apply Theorem \ref{thm:2.1} for the
$(n_1-1)^{th}$ time because, inductively, we have that
$\psi_{n_1-2}:M^{n-(n_1-2)}_{1,n_1-2} \to
\mathbb{CP}^{n-(n_1-2)}(4)$ is a Lagrangian immersion and the
restriction $A^{n_1-2}_J$ of the shape operator $A_J$ on
$\{X_{n_1-1},X_{n_1}, Y_1, \ldots,Y_{n_2}\}$ (which spans
$TM_{1,n_1-2}^{n-(n_1-2)}$) is defined as
\begin{equation}\label{eqn:3.78}
\left\{
\begin{aligned}
&A^{n_1-2}_{JX_{n_1-1}}X_{n_1-1}=\lambda_{n_1-1,n_1-1}X_{n_1-1},\\
&A^{n_1-2}_{JX_{n_1-1}}X_{n_1}=\mu_{n_1-1} X_{n_1},\\
&A^{n_1-2}_{JX_{n_1-1}}Y_j=\mu_{n_1-1} Y_j,\ \ 1\leq j\leq n_2,\\
&A^{n_1-2}_{JY_i}Y_j=\delta_{ij}\mu_{n_1-1} X_{n_1-1},\,\  1\leq i,
j\leq n_2.
\end{aligned}
\right.
\end{equation}

Then applying Theorem \ref{thm:2.2} by identifying $\mathcal{D}_1$
with $\mathrm{span}\{X_{n_1-1}\}$, and $\mathcal{D}_2$ with
$\mathrm{span}\{X_{n_1}, Y_1,\ldots,Y_{n_2}\}$, respectively, we
obtain that $M_{1,n_1-2}$ is locally a Calabi product Lagrangian
immersion of an $(n-(n_1-1))$-dimensional Lagrangian immersion $\psi_{n_1-1}
:M_{1,n_1-1}\rightarrow \mathbb{CP}^{n-(n_1-1)}(4)$ and a point.
Thus $M_{1,n_1-2}=I_{n_1-1} \times M_{1,n_1-1}$ and $M^n=I_1\times
I_2\times\cdots \times I_{n_1-1}\times M_{1,n_1-1}$,
$I_{n_1-1}\subset \mathbb{R}$.

As $\psi_{n_1-2}$ is minimal, we further apply Theorem
\ref{thm:2.1} (2) to see that
\begin{equation*}
\mu_{n_1-1}=\pm \frac{1}{\sqrt{n-(n_1-1)+1}},\ \ \psi_{n_1-1} \text{
is minimal},
\end{equation*}
and $\psi_{n_1-2}=\Pi_{n_1-2}\circ\tilde{\psi}_{n_1-2}$ for
\begin{equation*}
\begin{aligned}
\tilde{\psi}_{n_1-2}(t,p)=&\Big(\sqrt{\tfrac{n-(n_1-2)}{(n-(n_1-2))
+1}}e^{i\frac{1}{n-(n_1-2)+1}t}\tilde{\psi}_{n_1-1}(p), \\
&\ \
\sqrt{\tfrac{1}{n-(n_1-2)+1}}e^{-i\frac{n-(n_1-2)}{n-(n_1-2)+1}t}\Big),
\ (t,p)\in I_{n_1-1}\times M_{1,n_1-1}.
\end{aligned}
\end{equation*}
Here, $\Pi_{n_1-2}: \mathbb{S}^{2n-2n_1+5}(1)\to
\mathbb{CP}^{n-(n_1-2)}(4)$ is the Hopf fibration, and
$\tilde{\psi}_{n_1-1}:M_{1,n_1-1}\to \mathbb{S}^{2n-2n_1+3}(1)$ is
the horizontal lift of $\psi_{n_1-1}$.

We want to apply Theorem \ref{thm:2.1} for the $n_1^{th}$
time, for  the Lagrangian immersion
$\psi_{n_1-1}:M^{n-(n_1-1)}_{1,n_1-1}\to
\mathbb{CP}^{n-(n_1-1)}(4)$, given that the restriction
$A^{n_1-1}_J$ of the shape operator $A_J$ on $\{X_{n_1}, Y_1,
\ldots,Y_{n_2}\}$ (which spans $TM_{1,n_1-1}$) is defined as
\begin{equation}\label{eqn:3.79}
\left\{
\begin{aligned}
&A^{n_1-1}_{JX_{n_1}}X_{n_1}=\lambda_{n_1,n_1}X_{n_1},\\
&A^{n_1-1}_{JX_{n_1}}Y_j=\mu_{n_1} Y_j,\ \  1\leq j\leq n_2,\\
&A^{n_1-1}_{JY_i}Y_j=\delta_{ij}\mu_{n_1} X_{n_1}.
\end{aligned}
\right.
\end{equation}

Applying Theorem \ref{thm:2.2} again by identifying
$\mathcal{D}_1$ with $\mathrm{span}\{X_{n_1}\}$, and $\mathcal{D}_2$ with
$\mathrm{span}\{ Y_1,\ldots,Y_{n_2}\}$, we obtain that $\psi_{n_1-1}:
M_{1,n_1-1}\to \mathbb{CP}^{n-(n_1-1)}(4)$ is locally a Calabi
product Lagrangian immersion of an $(n-n_1)$-dimensional Lagrangian
immersion $\psi_{n_1} :M_{1,n_1}\rightarrow\mathbb{CP}^{n-n_1}(4)$
and a point. Thus $M_{1,n_1-1}=I_{n_1}\times M_{1,n_1}$ and
\begin{equation}\label{eqn:3.80}
M^n=I_1\times I_2\times\cdots\times I_{n_1}\times M_{1,n_1},\,
I_{n_1}\subset\mathbb{R}.
\end{equation}

As $\psi_{n_1-1}$ is minimal, we further apply Theorem
\ref{thm:2.1} (2) and we get
\begin{equation*}
\mu_{n_1}=\pm \frac{1}{\sqrt{n-n_1+1}},\ \ \psi_{n_1} \text{ is
minimal},
\end{equation*}
and $\psi_{n_1-1}=\Pi_{n_1-1}\circ\tilde{\psi}_{n_1-1}$ for
\begin{equation*}
\begin{aligned}
\tilde{\psi}_{n_1-1}(t,p)=&\Big(\sqrt{\tfrac{n-(n_1-1)}{(n-(n_1-1))
+1}}e^{i\frac{1}{n-(n_1-1)+1}t}\tilde{\psi}_{n_1}(p), \\
&\sqrt{\tfrac{1}{n-(n_1-1)+1}}e^{-i\frac{n-(n_1-1)}{n-(n_1-1)+1}t}\Big),
\ (t,p)\in I_{n_1}\times M_{1,n_1},
\end{aligned}
\end{equation*}
where $\Pi_{n_1-1}: \mathbb{S}^{2n-2n_1+3}(1)\to
\mathbb{CP}^{n-n_1+1}(4)$ is the Hopf fibration and
$\tilde{\psi}_{n_1}:M_{1,n_1}\to\mathbb{S}^{2n-2n_1+1}(1)$ is the
horizontal lift of  $\psi_{n_1}$.

Notice that  the restriction  $A^{n_1}_J$ of
the shape operator $A_J$ on $\{Y_1,\ldots,Y_{n_2}\}$ is $A^{n_1}_{JY_i}Y_j=0$.
Therefore, we eventually have that $M^n$ is locally
a Calabi product Lagrangian immersion of $n_1$ points
and an $n_2$-dimensional Lagrangian immersion
$$
\psi_{n_1}:M^{n_2}_2\to \mathbb{CP}^{n-n_1}(4),
$$
for $M^{n_2}_2:=M_{1,n_1}$ which has vanishing second fundamental
form. Moreover,
$$
M^n=I_1\times I_2\times \cdots\times I_{n_1}\times M^{n_2}_2,\ \
I_1,\ldots,I_{n_1}\subset \mathbb{R}.
$$

Finally, for $q\in M^{n_2}_2$ the parametrization of $\psi: M^n\to
\mathbb{CP}^n(4)$ is given by:
\begin{align*}
\psi(t_1,\ldots,t_{n_1},q)=\Big(&\sqrt{\tfrac{n-(n_1-1)}{n+1}}e^{i
\big(\frac{t_1}{n+1}+\frac{t_2}{n}+\cdots+\frac{t_{n_1-1}}{n-(n_1-2)+1}
+\frac{t_{n_1}}{n-(n_1-1)+1}\big)}\tilde{\psi}_{n_1}(q), \\
&\tfrac{1}{\sqrt{n+1}}e^{i\big(\frac{t_1}{n+1}+\frac{t_2}{n}
+\cdots+\frac{t_{n_1-1}}{n-(n_1-2)+1}-\frac{n-(n_1-1)}{n-(n_1-1)+1}t_{n_1}\big)},\\
&\tfrac{1}{\sqrt{n+1}}e^{i\big(\frac{t_1}{n+1}+\frac{t_2}{n}
+\cdots+\frac{t_{n_1-2}}{n-(n_1-3)+1}-\frac{n-(n_1-2)}{n-(n_1-2)+1}t_{n_1-1}\big)},\\
&\ \ \ \ldots\\
&\tfrac{1}{\sqrt{n+1}}e^{i\big(\frac{t_1}{n+1}+\frac{t_2}{n}-\frac{n-2}{(n-2)+1}t_3)},
\tfrac{1}{\sqrt{n+1}}e^{i(\frac{t_1}{n+1}-\frac{n-1}{n}t_2\big)},\\
&\tfrac{1}{\sqrt{n+1}}e^{-i\frac{n}{n+1}t_1}\Big),
\end{align*}
which, writing $\tilde{\psi}_{n_1}(q)=:(y_1,\ldots,y_{n_2+1})$, is
equivalent to
\begin{align}\label{eqn:3.81}
\begin{split}
\psi(t_1,\ldots,t_{n_1},q)=\Big(&\tfrac{1}{\sqrt{n+1}}e^{i
u_1},\ldots, \tfrac{1}{\sqrt{n+1}}e^{i u_{n_1}},\\
&\sqrt{\tfrac{n_2+1}{n+1}}e^{i u_{n_1+1}}\big(y_1, y_2,\ldots,
y_{n_2+1}\big)\Big),
\end{split}
\end{align}
where, $\{u_i\}_{1\le i\le n_1+1}$ are defined by
$$
\left\{
\begin{aligned}
&u_1=-\tfrac{n}{n+1}t_1,\\[-1mm]
&\ \ \ \ \ \ldots\\[-1mm]
&u_{n_1}=\tfrac{t_1}{n+1}+\tfrac{t_2}{n}+\cdots+\tfrac{t_{n_1-1}}{n-(n_1-2)
+1}-\tfrac{n-(n_1-1)}{n-(n_1-1)+1}t_{n_1},\\
&u_{n_1+1}=\tfrac{t_1}{n+1}+\tfrac{t_2}{n}
+\cdots+\tfrac{t_{n_1-1}}{n-(n_1-2)+1}+\tfrac{t_{n_1}}{n-(n_1-1)+1},
\end{aligned}
\right.
$$
and they satisfy $u_1+u_2+\cdots+u_{n_1}+(n_2+1)u_{n_1+1}=0$.

\vskip 1mm

This completes the proof of Theorem \ref{thm:3.1}.
\end{proof}

%=============================================================
\subsection{}\label{sect:3.2}~

Now, we deal with Case (ii), that is, we treat the case when
$c_1\neq0$ and $c_2\neq 0$.

We begin with the following result whose proof is similar to that of
\eqref{eqn:3.4}.
\begin{lem}\label{lem:3.8}
If \textbf{Case (ii)} occurs, then we have
\begin{equation} \label{eqn:3.82}
\langle Y_l, A_{JX_i} X_j\rangle=\langle X_i, A_{JY_l} Y_m\rangle=0,
\ 1\leq i,j\leq n_1, \ 1\leq l,m\leq n_2.
\end{equation}
\end{lem}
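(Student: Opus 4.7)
The plan is to mimic the proof of \eqref{eqn:3.4} given in Lemma \ref{lem:3.2}, exploiting the ``Tsinghua principle'' identity \eqref{eqn:2.11} together with the product curvature formula \eqref{eqn:2.12}. The key observation is that the derivation of \eqref{eqn:3.4} only ever uses $c_2 \neq 0$ (it does not require $c_1=0$), so the very same substitution delivers the first half of \eqref{eqn:3.82} verbatim in Case (ii). The second half will then follow by the symmetric substitution, now using $c_1 \neq 0$.

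Concretely, for the first identity I would plug $W=X_j$, $X=Y_k$, $Y=Y_l$, $Z=X_i$ (with $k\neq l$) into \eqref{eqn:2.11}. Applying \eqref{eqn:2.12} one checks that $R(X_j,Y_k)$ and $R(Y_l,X_j)$ vanish as operators since one factor has zero projection on $M_1$ and the other on $M_2$. Moreover $R(Y_k,Y_l)X_i=0$ because $(X_i)_2=0$. Hence the only surviving term in \eqref{eqn:2.11} is $R(Y_k,Y_l)Jh(X_j,X_i)=R(Y_k,Y_l)(A_{JX_j}X_i)$, and another application of \eqref{eqn:2.12} reduces this to
\begin{equation*}
0=c_2\bigl(\langle Y_l,A_{JX_j}X_i\rangle Y_k-\langle Y_k,A_{JX_j}X_i\rangle Y_l\bigr).
\end{equation*}
Since $c_2\neq 0$ and $Y_k,Y_l$ are linearly independent, the first equality in \eqref{eqn:3.82} drops out.

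For the second identity I would carry out the mirror-image substitution, choosing $W=Y_m$, $X=X_k$, $Y=X_l$, $Z=Y_i$ with $k\neq l$. By the same vanishing pattern — $R(Y_m,X_k)$, $R(X_l,Y_m)$ and $R(X_k,X_l)Y_i$ all die for the analogous reasons — only $R(X_k,X_l)(A_{JY_m}Y_i)$ survives, and \eqref{eqn:2.12} together with $c_1\neq 0$ yields $\langle X_k,A_{JY_m}Y_i\rangle=0$, which is the second equality in \eqref{eqn:3.82} after using the total symmetry of the cubic form $\langle h(\cdot,\cdot),J\cdot\rangle$.

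The only potential obstacle is the dimensional requirement: the first substitution needs $n_2\geq 2$ and the second needs $n_1\geq 2$, in order for distinct indices $k\neq l$ to exist. This is, however, not a genuine issue in Case (ii): the assumption $c_1\neq 0$ and $c_2\neq 0$ is only meaningful (as a condition on constant sectional curvature) when each factor has real dimension at least two, so we may freely assume $n_1,n_2\geq 2$ throughout. With this understood, the proof is a direct and symmetric adaptation of the argument already given for \eqref{eqn:3.4}.
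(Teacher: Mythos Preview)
Your proposal is correct and follows precisely the route the paper intends: the paper itself gives no separate proof for Lemma~\ref{lem:3.8} beyond the remark that it ``is similar to that of \eqref{eqn:3.4}'', and your argument is exactly this---the substitution into \eqref{eqn:2.11} that yields \eqref{eqn:3.3}, together with its mirror image using $c_1\neq 0$. Your observation about the dimensional requirement $n_1,n_2\geq 2$ being automatic in Case~(ii) is a useful clarification that the paper leaves implicit.
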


Then, as main result of this subsection we can prove the following
lemma.

\begin{lem}\label{lem:3.9}
\textbf{Case (ii)} does not occur.
\end{lem}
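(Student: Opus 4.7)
My plan is three steps. First, I would upgrade Lemma~\ref{lem:3.8}: since the cubic form $C(X,Y,Z):=\langle h(X,Y),JZ\rangle$ is totally symmetric (by \eqref{eqn:2.3} together with the self-adjointness of $A_J$), the two identities in Lemma~\ref{lem:3.8} give $\langle A_{JX_i}Y_l,X_j\rangle=C(X_i,X_j,Y_l)=0$ and $\langle A_{JX_i}Y_l,Y_m\rangle=C(X_i,Y_l,Y_m)=0$, hence $A_{JX_i}Y_l=0$. Equivalently, $h(X,Y)=0$ for every $X\in TM_1^{n_1}$ and $Y\in TM_2^{n_2}$, so the shape operator $A_J$ decouples the two factors.

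Second, I would evaluate the Gauss equation on the 2-plane spanned by unit vectors $X\in T_pM_1^{n_1}$ and $Y\in T_qM_2^{n_2}$. From \eqref{eqn:2.12} the product sectional curvature is $0$, while on the right-hand side of \eqref{eqn:2.4} the term $\langle A_{JX}Y,A_{JY}X\rangle$ vanishes by Step~1, and since $A_{JY}Y\in TM_2^{n_2}$ (by Lemma~\ref{lem:3.8}) while $A_{JX}$ annihilates $TM_2^{n_2}$ (again by Step~1), also $\langle A_{JX}A_{JY}Y,X\rangle=0$. What remains is $\tilde c$, so $\tilde c=0$ and the ambient space is $\mathbb{C}^n$.

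Third, I would promote this algebraic splitting of $h$ to a geometric product decomposition of the immersion. The vanishing of $h$ on mixed pairs, combined with $\tilde c=0$, makes the complex distribution $\mathcal D_1:=TM_1^{n_1}\oplus JTM_1^{n_1}$ parallel along $M^n$ with respect to $\tilde\nabla$: for $X,X'\in TM_1^{n_1}$ one has $\tilde\nabla_X X'=\nabla_XX'+h(X,X')\in TM_1^{n_1}\oplus JTM_1^{n_1}$ and $\tilde\nabla_X(JX')=J\tilde\nabla_XX'\in\mathcal D_1$, while for $Y\in TM_2^{n_2}$ and a product extension $X\in TM_1^{n_1}$ one has $\tilde\nabla_YX=\nabla_YX+h(Y,X)=0$. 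A parallel complex subbundle in $\mathbb{C}^n$ is locally constant, so a neighborhood decomposes as $\mathbb{C}^n=\mathbb{C}^{n_1}\times\mathbb{C}^{n_2}$ and $\psi=\psi_1\times\psi_2$, where each factor $\psi_i:M_i^{n_i}(c_i)\to\mathbb{C}^{n_i}$ is a minimal Lagrangian immersion of a space form of nonzero constant sectional curvature. Ejiri's theorem (as recalled in Section~\ref{sect:1}) then forces $c_i=0$, contradicting $c_1c_2\neq 0$.

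The step I expect to need the most care is the passage from the algebraic vanishing $h(TM_1^{n_1},TM_2^{n_2})=0$ to the geometric product structure in Step~3: one must verify the parallelism of $\mathcal D_1$ carefully (using both the mixed vanishing of $h$ and the product Levi-Civita identities) and then invoke Moore's lemma on reducible isometric immersions. An alternative that avoids Moore's lemma is to observe that $h|_{TM_1^{n_1}\times TM_1^{n_1}}$ already satisfies a Lagrangian-type Gauss equation with ambient curvature zero for an $n_1$-dimensional constant-curvature metric, so one may apply Ejiri's argument directly to conclude $c_1=0$, and symmetrically $c_2=0$.
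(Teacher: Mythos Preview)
Your proposal is correct but proceeds along a genuinely different route from the paper. The paper's proof is a short sketch that recycles the machinery already built in Section~\ref{sect:3.1}: assuming (after swapping factors if necessary) that $A_{JX}\not\equiv 0$ for some $X\in TM_1^{n_1}$, it repeats the frame construction of Lemma~\ref{lem:3.5} on $TM_1^{n_1}$ alone (using Lemma~\ref{lem:3.8} in place of Lemma~\ref{lem:3.2}) to obtain $A_{JX_1}X_1=\lambda_1 X_1$, $A_{JX_1}X_i=\lambda_2 X_i$ with $\lambda_1,\lambda_2$ constant, and then repeats the Codazzi argument leading to \eqref{eqn:3.27} to conclude $\nabla_X X_1=0$ for all $X\in TM_1^{n_1}$, whence $R(X_1,X_2)X_1=0$ contradicts $c_1\neq 0$. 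In particular, the paper never derives $\tilde c=0$ and never splits the immersion. Your approach instead (i) uses the total symmetry of the cubic form to sharpen Lemma~\ref{lem:3.8} to $h(TM_1,TM_2)=0$, (ii) reads $\tilde c=0$ off the Gauss equation on a mixed $2$-plane, and (iii) promotes the algebraic decoupling to a geometric product $\psi=\psi_1\times\psi_2$ in $\mathbb{C}^n$ via parallelism of $TM_1\oplus JTM_1$ and Moore's lemma, after which Ejiri forces $c_1=c_2=0$. What the paper's route buys is brevity in context: no need for $\tilde c=0$, no Moore-type reduction, just reuse of two arguments already written out. What your route buys is conceptual transparency and independence from the earlier frame gymnastics, plus the intermediate fact $\tilde c=0$ that the paper does not isolate; your closing alternative (running Ejiri's argument intrinsically on one factor after Step~2) is the one closest in spirit to what the paper actually does.
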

\begin{proof}
Suppose on the contrary that Case (ii) does occur. From Lemma
\ref{lem:3.1} we know that $A_J$ vanishes nowhere. We may assume
that there exist $X\in T_pM_1^{n_1}$ such that $A_{JX}\neq0$ at the
point $p$. Given Lemma \ref{lem:3.8}, similarly to the proof of Lemma
\ref{lem:3.5}, we can show that there exists a local orthonormal
frame $\{X_1,\ldots,X_{n_1}\}\in TM_1^{n_1}$ on a neighbourhood of
$p$ such that the shape operator satisfies
\begin{equation}\label{eqn:3.83}
A_{JX_1}X_1=\lambda_1X_1,\ A_{JX_1}X_i=\lambda_2X_i, \ 2\le i\leq
n_1,
\end{equation}
where $\lambda_1$ and $\lambda_2$ are constants. Then, similarly to
the proof of \eqref{eqn:3.27}, we can show that ${\nabla}_XX_1=0$ for
any $X \in TM_1^{n_1}$. This implies that ${R}(X_1,X_2)X_1=0$, which
is a contradiction to $c_1c_2\neq0$.
\end{proof}

\vskip 3mm

\begin{proof}[\textbf{ Completion of the Proof of the Main Theorem}]~

If $c_1=c_2=0$, it follows from \eqref{eqn:2.12} that
$(M^n,\langle\cdot,\cdot\rangle)$ is flat. According to the result
of \cite{E,LZ} and \cite{CO} (see the Gauss equation (3.5) in
\cite{CO}), we get item (1) of the Main Theorem.

If $c_1^2+c_2^2\neq0$, we have two cases: Case (i) and Case (ii).

For Case (i), by Theorem \ref{thm:3.1}, we obtain the minimal
Lagrangian submanifold as stated in item (2) of the Main Theorem.

Whereas for Case (ii), by Lemma \ref{lem:3.9}, it does not occur.

Hence, we have completed the proof of the Main Theorem.
\end{proof}

\vskip 5mm

%%%%%%%%%%%%%%%%%%%%%%%%%%%%%%%%%%%%%%%%%%%%%%%%%%%

\vskip 1cm

\begin{flushleft}

Xiuxiu Cheng and Zejun Hu:\\
{\sc School of Mathematics and Statistics, Zhengzhou University,\\
Zhengzhou 450001, People's Republic of China.}\\
E-mails: chengxiuxiu1988@163.com; huzj@zzu.edu.cn.
\vskip 2mm

Marilena Moruz:\\
{\sc Department of Mathematics, KU Leuven, Celestijnenlaan 200B, Box 2400, BE-3001 Leuven, Belgium.}\\
E-mail: marilena.moruz@kuleuven.be.
\vskip 2mm

Luc Vrancken:\\
{\sc Universit\'e Polytechnique Hauts-de-France, F-59313 Valenciennes, France;
Department of Mathematics, KU Leuven, Celestijnenlaan 200B, Box 2400, BE-3001 Leuven, Belgium.}\\
E-mail: luc.vrancken@uphf.fr. 

\end{flushleft}


\begin{thebibliography}{999}

\bibitem{ALVW}
Anti\'c, M., Li, H., Vrancken, L., Wang, X.:
{\it Affine hypersurfaces with constant sectional curvature}.
Preprint, 2017.

\bibitem{BR}
Barros, M., Romero, A.: {\it Indefinite K\"ahler manifolds}.
Math. Ann. \textbf{261}, 55--62 (1982)

\bibitem{CLU}
Castro, I., Li, H., Urbano F.:
{\it Hamiltonian-minimal Lagrangian submanifolds in complex space forms}.
Pac. J. Math. \textbf{227}(1), 43--63 (2006)

\bibitem{CU}
Castro, I., Urbano, F.:
{\it On a minimal Lagrangian submanifold of $\mathbb C^n$ foliated by spheres}.
Michigan Math. J. \textbf{46}(1), 71--82 (1999)

\bibitem{C}
Chen, B.-Y.: {\it Riemannian geometry of Lagrangian submanifolds}.
Taiwan. J. Math. \textbf{5}, 681--723 (2001)

\bibitem{CDVV}
Chen, B.-Y., Dillen, F., Verstraelen, L., Vrancken L.:
{\it Lagrangian isometric immersions of a real-space-form $M^n(c)$ into a complex-space-form $\tilde M^n(4c$)},
Math. Proc. Cambridge Philos. Soc. \textbf{124}(1), 107--125 (1998)

\bibitem{CO}
Chen, B.-Y., Ogiue K.: {\it On totally real submanifolds}.
Trans. Am. Math. Soc. \textbf{193}, 257--266 (1974)

\bibitem{CV}
Chen, B.-Y., Vrancken L.: {\it Lagrangian minimal isometric immersions of a
Lorentzian real space form $M^n_1(c)$ into a Lorentzian complex space form
$\tilde{M}^n_1(4c)$}. T\^ohoku Math. J. \textbf{54}(1), 121--143 (2002)

\bibitem{CHMV}
Cheng, X., Hu, Z., Moruz, M., Vrancken, L.:
{\it On product affine hyperspheres in $\mathbb{R}^{n+1}$}. Sci. China Math.
Doi: 10.1007/s11425-018-9457-9. arXiv:1812.07901v1 [math.DG]

\bibitem{DT}
Dajczer, M., Tojeiro R.:
{\it Flat totally real submanifolds of $\mathbf{CP}^n$ and the symmetric generalized wave equation}.
T\^ohoku Math. J. \textbf{47}, 117--123 (1995)

\bibitem {DLVW}
Dillen, F., Li, H., Vrancken, L., Wang, X.:
{\it Lagrangian submanifolds in complex projective space with parallel second fundamental form}.
Pac. J. Math. \textbf{255}, 79--115 (2012)

\bibitem{E}
Ejiri, N.: {\it Totally real minimal immersions of $n$-dimensional
real space forms into $n$-dimensional complex space forms}.
Proc. Am. Math. Soc. \textbf{84}, 243--246 (1982)

\bibitem{HLV}
Hu, Z., Li, H., Vrancken, L.:
{\it On four-dimensional Einstein affine hyperspheres}.
Differ. Geom. Appl. \textbf{50}, 20--33 (2017)

\bibitem{KV}
Kriele, M., Vrancken, L.: {\it Minimal Lagrangian submanifolds of
Lorentzian complex space forms with constant sectional curvature}.
Arch. Math. (Basel) \textbf{72}, 223--232 (1999)

\bibitem{LZ}
Li, A.-M., Zhao, G.S.: {\it Totally real minimal submanifolds in $\mathbf{CP}^n$}.
Arch. Math. \textbf{62}, 562-568 (1994)

\bibitem{LW1}
Li, H., Wang, X.: {\it Isotropic Lagrangian submanifolds in complex Euclidean space
and complex hyperbolic space}. Result. Math. \textbf{56}, 387--403 (2009)

\bibitem{LW2}
Li, H., Wang, X.: {\it Calabi product Lagrangian immersions in
complex projective space and complex hyperbolic space}.
Result. Math. \textbf{59}, 453--470 (2011)

\bibitem{LW3}
Li, H., Wang, X.: {\it A differentiable sphere theorem for compact Lagrangian
submanifolds in complex Euclidean space and complex projective space}.
Commun. Anal. Geom. \textbf{22}(2), 269--288 (2014)

\bibitem{MO}
Ma, H., Ohnita, Y.: {\it Differential Geometry of Lagrangian Submanifolds
and Hamiltonian Variational Problems}. Harmonic Maps and Differential Geometry.
pp.115--134. Contemp. Math., \textbf{542}, American Mathmatical Society, Providence (2011)

\bibitem{O}
O'Neill, B.: {\it Semi-Riemannian Geometry with Applications to Relativity}.
Academic Press, New York, 1983.

\bibitem{T}
Tojeiro, R.: {\it Lagrangian submanifolds of constant sectional curvature and
their Ribaucour transformation}.
Bull. Belg. Math. Soc. Simon Stevin \textbf{8}(1), 29--46 (2001)

\bibitem{VLS}
Vrancken, L., Li, A.-M., Simon, U.: {\it Affine spheres with
constant affine sectional curvature}.
Math. Z. \textbf{206}, 651--658 (1991)

\bibitem{WLV1}
Wang, X., Li, H., Vrancken, L.:
{\it Lagrangian submanifolds in 3-dimensional complex space forms with isotropic cubic tensor}.
Bull. Belg. Math. Soc. Simon Stevin \textbf{18}(3), 431--451 (2011)

\bibitem{WLV2}
Wang, X., Li, H., Vrancken, L.: {\it Minimal Lagrangian isotropic
immersions in indefinite complex space forms}.
J. Geom. Phys. \textbf{62}, 707--723 (2012)

\end{thebibliography}
\end{document}